\newcommand{\N}{\mathbb N}
\newcommand{\Zet}{\mathbb Z}
\newcommand{\Real}{\mathbb R}
\newcommand{\Dbf}{\mathbf D}
\newcommand{\Abf}{\mathbf A}
\newcommand{\Vbf}{\mathbf V}
\def\fbr#1#2#3{#1\times_{#3}#2}
\def\aff{\text{aff}}
\def\Aff{\text{Aff}}
\def\Sec{\text{Sec}}
\def\Vect{\text{Vect}}
\def\mlt#1{\cdot_#1}
\def\Vmodel{\mathcal V}
\def\dblVmodel{\mathcal V\!\mathcal V}
\def\vhull#1{\widehat{#1}}
\def\dblvhull#1{\widehat{\widehat{#1}}}
\def\und#1{\underline{#1}}
\def\dund#1{\underline{\underline{#1}}}
\def\spec#1{\mathbf #1}
\def\affdual#1{#1^{\#}}
\def\vectdual#1{#1^{\star}}
\def\dblvecdual#1#2{#1^{\varhexstar #2}}
\def\dblaffdual#1#2{#1^{\# #2}}
\def\tgT{\sT}
\def\ctgT{\sT^\star}
\def\Phase{\sP}
\def\Contact{\sC}
\def\dualP{\sP^\bullet}
\def\redtgT{\tilde{\sT}}
\def\redctgT{\tilde{\sT}^\star}
\def\redtgTbar{\bar{\sT}}
\def\ahyp#1{#1^\ddag}
\def\affdiff{\textbf d} 
\def\cdiff{\textbf c}
\def\subaff#1#2{#1^{\natural #2}}
\def\afftg#1{\mathbb S #1}
\def\affctg#1{\mathbb S^\bullet #1}
\def\dblB{\mathbb B}
\font\black=cmbx10 \font\sblack=cmbx7 \font\ssblack=cmbx5 \font\blackital=cmmib10 \skewchar\blackital='177
\font\sblackital=cmmib7 \skewchar\sblackital='177 \font\ssblackital=cmmib5 \skewchar\ssblackital='177
\font\sanss=cmss12 \font\ssanss=cmss8 scaled 900 \font\sssanss=cmss8 scaled 600 \font\blackboard=msbm10
\font\sblackboard=msbm7 \font\ssblackboard=msbm5 \font\caligr=eusm10 \font\scaligr=eusm7 \font\sscaligr=eusm5
 \font\fraktur=eufm10 \font\sfraktur=eufm7 \font\ssfraktur=eufm5 
\font\bsymb=cmsy10 scaled\magstep2
\def\all#1{\setbox0=\hbox{\lower1.5pt\hbox{\bsymb
       \char"38}}\setbox1=\hbox{$_{#1}$} \box0\lower2pt\box1\;}
\def\exi#1{\setbox0=\hbox{\lower1.5pt\hbox{\bsymb \char"39}}
       \setbox1=\hbox{$_{#1}$} \box0\lower2pt\box1\;}
\def\tx#1{{\fam0\relax#1}}
\def\ss#1{{\fam\ssfam\relax#1}}
\def\hpb#1{\setbox0=\hbox{${#1}$}
    \copy0 \kern-\wd0 \kern.2pt \box0}
\def\vpb#1{\setbox0=\hbox{${#1}$}
    \copy0 \kern-\wd0 \raise.08pt \box0}
\def\pmb#1{\setbox0\hbox{${#1}$} \copy0 \kern-\wd0 \kern.2pt \box0}
\def\pmbb#1{\setbox0\hbox{${#1}$} \copy0 \kern-\wd0
      \kern.2pt \copy0 \kern-\wd0 \kern.2pt \box0}
\def\pmbbb#1{\setbox0\hbox{${#1}$} \copy0 \kern-\wd0
      \kern.2pt \copy0 \kern-\wd0 \kern.2pt
    \copy0 \kern-\wd0 \kern.2pt \box0}
\def\pmxb#1{\setbox0\hbox{${#1}$} \copy0 \kern-\wd0
      \kern.2pt \copy0 \kern-\wd0 \kern.2pt
      \copy0 \kern-\wd0 \kern.2pt \copy0 \kern-\wd0 \kern.2pt \box0}
\def\pmxbb#1{\setbox0\hbox{${#1}$} \copy0 \kern-\wd0 \kern.2pt
      \copy0 \kern-\wd0 \kern.2pt
      \copy0 \kern-\wd0 \kern.2pt \copy0 \kern-\wd0 \kern.2pt
      \copy0 \kern-\wd0 \kern.2pt \box0}
\def\sC{{\ss C}}
\def\sP{{\ss P}}
\def\sT{{\ss T}}
\def\xi{\tx{i}}
\newcommand{\ee}{\end{equation}}
\newcommand{\bea}{\begin{eqnarray}}
\newcommand{\eea}{\end{eqnarray}}
\newcommand{\beas}{\begin{eqnarray*}}
\newcommand{\eeas}{\end{eqnarray*}}
\newcommand{\R}{\mathbb{R}}
\newcommand{\nn}{\nonumber}
\mathchardef\za="710B  
\mathchardef\zb="710C  
\mathchardef\zg="710D  
\mathchardef\zd="710E  
\mathchardef\zve="710F 
\mathchardef\zz="7110  
\mathchardef\zh="7111  
\mathchardef\zvy="7112 
\mathchardef\zi="7113  
\mathchardef\zk="7114  
\mathchardef\zl="7115  
\mathchardef\zm="7116  
\mathchardef\zn="7117  
\mathchardef\zx="7118  
\mathchardef\zp="7119  
\mathchardef\zr="711A  
\mathchardef\zs="711B  
\mathchardef\zt="711C  
\mathchardef\zu="711D  
\mathchardef\zvf="711E 
\mathchardef\zq="711F  
\mathchardef\zc="7120  
\mathchardef\zw="7121  
\mathchardef\ze="7122  
\mathchardef\zy="7123  
\mathchardef\zf="7124  
\mathchardef\zvr="7125 
\mathchardef\zvs="7126 
\mathchardef\zf="7127  
\mathchardef\zG="7000  
\mathchardef\zD="7001  
\mathchardef\zY="7002  
\mathchardef\zL="7003  
\mathchardef\zX="7004  
\mathchardef\zP="7005  
\mathchardef\zS="7006  
\mathchardef\zU="7007  
\mathchardef\zF="7008  
\mathchardef\zW="700A  
\newcommand{\epf}{\hfill$\Box$}
\newcommand{\bepf}{\noindent\textit{Proof.-} }
\def\be#1{\begin{equation}\label{#1}}
\newtheorem{theo}{Theorem}[section]
\newtheorem{prop}{Proposition}[section]
\newtheorem{lem}{Lemma}[section]
\newtheorem{definition}{Definition}[section]
\begin{document}
\title{Double Affine Bundles\thanks{Research financed by the Polish
Ministry of Science and Higher Education 
under the grant No. N201 005 31/0115.}}

        \author{
        Janusz Grabowski$^1$, Pawe\l\ Urba\'nski$^2$  Miko\l aj Rotkiewicz$^3$\\
        \\
         $^1$   {\it Institute of Mathematics}\\
                {\it Polish Academy of Sciences}\\
 $^2$ {\it Division of Mathematical Methods in Physics}\\
                {\it University of Warsaw} \\
                $^3$ {\it Institute of Mathematics}\\
                {\it University of Warsaw}
                }
\date{}
\maketitle
\begin{abstract} A theory of double affine and special double affine bundles, i.e. differential manifolds
with two compatible (special) affine bundle structures, is developed as an affine counterpart of the theory of
double vector bundles. The motivation and basic examples come from Analytical Mechanics, where double affine
bundles have been recognized as a proper geometrical tool in a  frame independent description of many
important systems. Different approaches to the (special) double affine bundles are compared and carefully
studied together with the problems of double vector bundle models and hulls, duality, and relations to
associated phase spaces, contact structures, and other canonical constructions.

\bigskip\noindent
\textit{MSC 2000: 53C15, 53C80, 55R10, 51N10, 53D35, 70G45}

\medskip\noindent
\textit{Key words: affine space, affine bundle, vector bundle, double vector bundle, duality}
\end{abstract}

\section{Introduction}
Double structures appear in geometry in a natural way, as the result of iteration of some functors. For
example, iterations of the tangent and cotangent functors result in  $\ctgT \tgT M$, $\tgT \tgT M$,
$\ctgT\ctgT M$, and a fundamental role played by these objects in Analytical Mechanics is well known. These
are canonical examples of double vector bundles \cite{Pr}, i.e. manifolds with two compatible vector bundle
structures. The Lagrangian mechanics of a system with the configuration manifold $M$ is based on a canonical
isomorphism between the double vector bundles  $\ctgT \tgT M$  and $\tgT \ctgT M$. On the other hand, the
Hamiltonian formulation of a dynamics is based on an isomorphism between $\ctgT \ctgT M$ and $\tgT \ctgT M$
and the Legendre transformation makes use of the canonical isomorphism of $\ctgT E$ and $\ctgT E^\star$ for
$E$ being a vector bundle (\cite{Tu}). Natural generalizations of these isomorphisms lead to a very useful
understanding of  a Lie algebroid structure as a certain morphism of double vector bundle structures. There
are many other instances in geometry, where double vector bundles emerge in a natural way (\cite{KU, Mac}).

On the other hand, we encounter in Physics many situations, where we are forced to replace vector-like objects
by affine ones, in order to obtain frame independent (gauge independent) formulations of our theories. For
example, Newtonian mechanics, relativistic mechanics of a charged particle and nonrelativistic mechanics of
nonautonomous systems require affine-like objects. Lagrangians (Hamiltonians) are no longer functions on
tangent (cotangent) bundles, but sections of affine bundles (\cite{TU,U, GGU1, GGU2}). Geometrical objects
which are suitable for these situations are provided by the geometry of affine values (AV-geometry) (\cite{U,
GGU1, GGU2}). Like in the case of vector bundles, we have to work with iteration of functors which give
objects with two compatible affine bundle structure.

The aim of this work is to develop a consistent theory of double objects in the category of affine bundles,
which generalize canonical objects known from the AV-geometry. The basic example (\cite{GGU1}) is the affine
phase bundle used in relativistic mechanics of a charged particle (for details see Section 5). We concentrate
on purely mathematical problems, as examples of applications being the starting point of our investigations
one can find in the papers cited above. We present a systematic introduction to double affine objects in both:
double affine bundles and special double affine bundles settings proving several theorems describing mutual
relations of the introduced objects. Our approach to double affine structures corresponds to the novel
approach to double vector bundles presented in (\cite{GR}).

The paper is organized as follows:

In Section~2 we  give two equivalent definitions of a {\it double affine bundle}. One uses compatibility
conditions for two affine bundle structures, while the other is given in terms of local trivializations. In
Section~3 we introduce the  notion of a {\it model double vector bundle} and a {\it vector hull} of a double
affine bundle. From the point of view of applications, the most important is the notion of a {\it special
affine bundle} and consequently, a {\it special double affine bundle}. We should view a special affine bundle
as a generalization of the product $E\times\R$,where $E$ is a vector bundle, with functions on $E$ interpreted
as sections of the trivial bundle $E\times\R \rightarrow E$.  Like vector bundles, special affine bundles form
a category with duality (which is no longer true for just affine bundles). We get also duality theorems,
similar to the ones known in the category of double vector bundles (\cite{Mac,KU}).  In Section~5 we analyze
the case of  double bundles which appear as a result of an application of the {\it phase functor} to a special
affine bundle. This generalizes the well-known structures of the cotangent of a vector bundle $\ctgT E$. The
canonical example, the {\it contact bundle} $\Contact A $ of a special double affine bundle, is given in
Section~6. It is obtained by applying  the contact functor to a special affine bundle $A$. There is a
canonical isomorphism of $\Contact A $ and $\Contact \affdual{A}$, where $\affdual{A}$ is the special affine
dual of $A$, which can be interpreted as a functorial version of the {\it universal Legendre transformation}
of Analytical Mechanics. This section contains also an interesting description of affine duality as being
encoded in a single geometrical object - the {\it double affine dual} $\mathbb B A$ - derived from the
cotangent bundle $\ctgT A$ and its canonical symplectic structure. The last section is devoted to natural
generalizations of all these concepts to $n$-affine bundles.

\section{Basic examples and definitions}
We assume that the reader has a basic knowledge about affine spaces, which can be found in many books on
linear algebra and linear geometry.

An affine space is usually defined as a triple $(A, V, +)$, where $+: A\times V\to A$ is a free and transitive
action of a vector space $V$ on a manifold $A$. Within this definition the particular vector space $V$ is
fixed and the affine combinations map $\aff_+ = \aff: A\times A\times \Real \to A$,
$$
\aff(a, b;\zl) :=   b + \zl \cdot [b, a]_V,
$$
where $[b, a]_V\in V$ is the unique vector such that $b + [b, a]_V = a$, does not determine the affine
structure. Indeed, for any linear automorphism $\zvf : V\to V$, the action $+': A\times V\to A$ defined by $a
+' v =  a + \zvf(v)$ gives the same  map $\aff_{+'} = \aff_+$. We prefer to  work with objects whose structure
is completely encoded in the affine combinations map. In order to do this, we formally define an {\it affine
space} as an abstract class of a triple $(A, V, +)$ as above subject to the following equivalence relation:
$(A, V, +)\sim (A', V', +')$ if and only if $A=A'$ and $\aff_+  =\aff_{+'}$. Let $(A,V, +)$ be a representant
of an affine space and consider the following relation on $A\times A$:
\be{e:rVA}
(a,b) \sim (a', b') \iff \aff(a, b'; 1/2)=\aff(b, a';1/2),
\ee
It follows immediately that it is an equivalence relation and its equivalence classes, denoted  by $[a,b]$,
$a, b\in A$, form a vector space, denoted by $\Vmodel(A)$, which is isomorphic to $V$. Within this
isomorphism, the action of $\Vmodel(A)$ reads as
\be{e:VA-action}
a+[a,b] = b.
\ee
The structure maps of $\Vmodel(A)$ can be expressed in terms of the map $\aff$,
$$
\zl\cdot[a,b] := [a, \aff(b, a; \zl)], \quad [a,b] + [a,c] = 2\cdot [a, \aff(b,c;1/2)],
$$
hence the vector space $\Vmodel(A)$ is defined independently on the choice of a representant $(A, V, +)$. This
shows that the considered affine space has a canonical representant $(A, \Vmodel(A), +)$. It allows us to
write simply $(A, \aff)$ instead of  the class of  $(A, V, +)$. The vector space $\Vmodel(A)$ is called the
{\it model vector space} of $(A, \aff)$.

We begin with a definition of a double affine bundle formulated in terms of natural maps for affine bundles.
Then we shall prove that this notion has also a nice description in local coordinate systems as in
(\cite{GGU1}).

\begin{definition}
A {\it double affine bundle} $\Abf=(A; A_1, A_2; M)$ is a commuting diagram of four affine bundles
\begin{equation}\label{4affbndl}
\xymatrix{ A\ar[r]^{\zp_2} \ar[d]^{\zp_1} & A_2\ar[d]^{\zp_1'} \\
A_1\ar[r]^{\zp_2'} & M }
\end{equation}
such that
\begin{enumerate}
\item[(i)] $(\pi_i, \pi_i')$, $i=1, 2$, are morphisms of affine bundles, \item[(ii)] $(\pi_1, \pi_2):A\to
A_1\times_M A_2$ is surjective.

For $x, y\in A$ being in the same fiber of $\pi_i$ and $\zl\in\Real$, let $\aff_i(x, y;\zl)\in A$ denote the
affine combination of $x$ and $y$ with weights $\zl$ and $1-\zl$, respectively. \item[(iii)] For each
$\zl\in\Real$, $i=1,2$,
\[
\aff_i(-, -; \zl): A\times_{\pi_i}A\to A
\]
is a morphism of affine bundles.
\end{enumerate}
\end{definition}

\noindent These conditions need a few explanation remarks.

\medskip
\noindent{\bf Remark 1}. The condition (ii) means that the morphisms $(\pi_i, \pi_i')$, $i=1, 2$, are
fiberwise surjective maps. Condition (i) does not imply (ii). For example, let us take $M =\{m\}$ to be a
single point, $A=\Real^2$ and let $\zp_1=\zp_2$ be the projection on the first factor $A_1=A_2=\Real$. Then
$\zp_1$ (and so $\zp_2$) is a morphism of affine bundles but it is constant on each fiber, so the map $(\zp_1,
\zp_2):A\to A_1\times_M A_2$ is not surjective. Let us mention that in a definition of a double vector bundle
it is enough to have a commuting diagram (\ref{4affbndl}) of four vector bundles and assume that homotheties
of $\zp_1$ and $\zp_2$ commute (see \cite{GR})). Then automatically the analog of $(ii)$  is satisfied.

\medskip
\noindent{\bf Remark 2}. Let us explain the condition (iii), say for $i=1$. Consider the horizontal inclusions
\[
\xymatrix{  \fbr{A}{A}{}\ar[d]^{\fbr{\pi_2}{\pi_2}{}} &  \fbr{A}{A}{\pi_1}\ar@{_(->} @<0.5ex>[l] \ar[d]^{\zt}
\ar@<-4ex>[r]^{\aff_1} & A\ar[d]^{\zp_2} \\
 \fbr{A_2}{A_2}{} & \fbr{A_2}{A_2}{\pi_1'}\ar@{_(->} @<0.5ex>[l] & A_2}
\]
where $\zt$ is the restriction of $\fbr{\pi_2}{\pi_2}{}$ to $\fbr{A}{A}{\pi_1}$. We shall show that
$\fbr{A}{A}{\zp_1}$ is an affine subbundle of $(\fbr{A}{A}{}, \fbr{\zp_2}{\zp_2}{})$. The condition (i)
implies that the fibers of $\zt$ are closed for taking affine combinations. Indeed, let $p=(p_1, p_2)$,
$q=(q_1, q_2)$ be in a fiber $F$ of $\zt$ and let $\zl\in \Real$. We want to show that $\aff_2(p, q; \zl)$
lies in $F$, i.e.
\[
\zp_1(\aff_2(p_1, q_1; \zl)) = \pi_1(\aff_2(p_2, q_2; \zl)).
\]
We use the fact that $\zp_1$ is an affine morphism and get
\bea\label{e:affsubndl}
\pi_1(\aff_2(p_1, q_1; \zl))&=&\aff_2(\pi_1(p_1), \pi_1(q_1); \zl)\nn \\
&=& \aff_2(\pi_1(p_2), \pi_1(q_2); \zl)  \\
&=& \pi_1(\aff_2(p_2, q_2; \zl)).\nn
\eea
First we prove that the canonical map $\zp := (\zp_1, \zp_2): A\to\fbr{A_1}{A_2}{M}$ is an affine bundle
projection with respect to both affine structures on $A$. Since $\zp_1$ is a morphism of affine bundles which
is surjective along the fibers (by condition (ii)), each fiber $F_{a_1, a_2} = (\zp_1, \zp_2)^{-1}(a_1, a_2)$,
$(a_1, a_2)\in\fbr{A_1}{A_2}{M}$, is an affine subspace of $\zp_2^{-1}(a_2)$ and the dimension of $F_{a_1,
a_2}$ does not depend on $(a_1,a_2)$. From symmetry, $F_{a_1, a_2}$ has also another affine space structure
inherited from $\aff_1$. We shall show later (Lemma \ref{l:twoaffstr}) that these two structures coincide.
Moreover, the fibers $F_{a_1, a_2}$ depend smoothly on $a_1$ and $a_2$, hence $\zp$ is a projection of a
locally trivial fibration, and because the fibers are affine spaces, it is an affine bundle projection. Hence
for any contractible open set $U\subset M$, $A_{|U} := (\zp_1'\circ\zp_2)^{-1}(U)$ admits a trivialization
$A_{|U} \simeq (\fbr{A_1}{A_2}{U})\times F$ for a fixed affine space $F$. Note that we do not claim here, that
$A$ is locally trivial as a double affine bundle, what we prove in the Theorem~\ref{eqas}. It follows now
easily that $\zt$ is also a locally trivial fibration, and because of (\ref{e:affsubndl}), it is an affine
subbundle, as we claimed. If (ii) is not satisfied then it may happen that some of the fibers of $\zt$ are
empty, as it is the case for the data from Remark 1.

\medskip
\noindent{\bf Remark 3}. Condition (iii) can be written in a form of interchange law
\be{intlaw}
\aff_2(\aff_1(x_1, x_2; \zl), \aff_1(y_1, y_2; \zl); \mu) = \aff_1(\aff_2(x_1, y_1; \mu), \aff_2(x_2, y_2;
\mu); \zl).
\ee

\noindent Note that the affine structure on $A_1$ (resp. $A_2$) is determined by affine combinations $\aff_2$
(resp. $\aff_1$) on $A$. This is so because $\zp_1$ (resp. $\zp_2$) is a morphism of affine bundles. Later on
we shall write simply $\aff_2$ (resp. $\aff_1$) for affine combinations on $A_1$ (resp. $\zp_2$).

\medskip
Another approach to double affine bundles is possible. In the paper (\cite{GGU1}) double affine bundles are
defined by means of gluing trivial double affine bundles

\begin{equation}
\xymatrix{U_\alpha\times K_1\times K_2\times K \ar[rr]^{pr_2} \ar[d]^{pr_1} && U_\alpha\times K_2\ar[d]  \\
U_\alpha\times K_1 \ar[rr] && U_\alpha }
\end{equation}
We glue such trivial double affine bundles by means of a family of isomorphisms of trivial double affine
bundles $\zvf_{\alpha, \beta}: (U_\alpha\cap U_\beta)\times K_1\times K_2\times K \circlearrowleft$. Let us
take affine coordinates $(y^j)$, $(z^a)$, $(c^u)$ on affine spaces $K_1$, $K_2$ and $K$, respectively. Then,
being an isomorphism of trivial double affine bundles means that the change of coordinates $\zvf_{\alpha,
\beta}: (x, y, z, c)\mapsto (x', {y}', {z}', {c}')$ has the form
\bea\label{coord}
x'&=&x'(x),\nn\\
{y^j}' &=& \alpha_0^j(x)+ \sum_i\alpha_i^j(x)y^i, \nn \\
{z^a}' &=& \beta_0^a(x) + \sum_b\beta_b^a(x)z^b, \\
{c^u}' &=& \gamma_{00}^u(x) + \sum_i \gamma_{i0}^u(x)y^i + \sum_b\gamma_{0b}^u(x)z^b +
\sum_{i,b}\gamma_{ib}^u(x)y^iz^b + \sum_w\sigma_w^u(x)c^w.\nn
\eea
We shall prove that these  two approaches are equivalent. Without loss of generality we assume throughout this
paper that the manifold $M$ is connected.
\begin{theo}\label{eqas} Let $\Abf=(A; A_1, A_2; M)$ be a double affine bundle as in (\ref{4affbndl}).
Then, there exist an open covering $\{U_\alpha\}_{\za\in\mathcal{I}}$ of $M$, affine spaces $K_1$, $K_2$, $K$
and diffeomorphisms
$$
\zvf_{\alpha}: A_{|U_\alpha} \to U_\alpha\times K_1\times K_2\times K
$$
inducing  isomorphisms of $A_{|U_\alpha}$ with trivial double affine bundle, such that the gluing
$\zvf_{\beta}\circ \zvf_{\alpha}^{-1}$ over $U_\alpha\cap U_\beta$ has the form (\ref{coord}).
\end{theo}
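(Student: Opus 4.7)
The plan is to build $\varphi_\alpha$ by combining three local trivializations: of $A_1$ and $A_2$ as affine bundles over $M$, and of $A$ as an affine bundle over $A_1\times_M A_2$ (the latter granted by Remark~2). The condition~(iii), in the form of the interchange law~(\ref{intlaw}), will then be used to upgrade $\varphi_\alpha$ to an isomorphism of double affine bundles; the form~(\ref{coord}) of the transition maps will follow from the fact that the cocycle must be affine along the fibers of each of the two projections.

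First I would cover $M$ by contractible open sets $U_\alpha$ over which both $A_1$ and $A_2$ trivialize as affine bundles, via $\psi_i^\alpha:A_i|_{U_\alpha}\to U_\alpha\times K_i$ for fixed model affine spaces $K_i$. By Remark~2, $\pi:=(\pi_1,\pi_2):A\to A_1\times_M A_2$ is itself an affine bundle with typical fiber a fixed affine space $K$; contractibility of $(A_1\times_M A_2)|_{U_\alpha}$ yields a section $\sigma_\alpha$ of $\pi$ over this set. Define
\[
\varphi_\alpha(x)\ :=\ \bigl(\,\pi_1'(\pi_1(x)),\ y(x),\ z(x),\ c(x)\,\bigr)\ \in\ U_\alpha\times K_1\times K_2\times K,
\]
where $y(x),z(x)$ are the $K_i$-components of $\psi_i^\alpha(\pi_i(x))$ and $c(x)\in\Vmodel(K)$ is the unique vector such that $\sigma_\alpha(\pi(x))+c(x)=x$ in the affine fiber of $\pi$. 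By construction $\varphi_\alpha$ is a diffeomorphism intertwining $\pi_1,\pi_2$ with the obvious projections of the target.

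Next I would promote $\varphi_\alpha$ to a double affine bundle isomorphism by verifying that $\aff_1$ and $\aff_2$ are carried to the standard product combinations. Condition~(i) handles the $K_j$-components ($j\neq i$), so the issue reduces to the $K$-component: one must show that $c(\aff_i(x_1,x_2;\lambda))$ is the corresponding affine combination of $c(x_1),c(x_2)$. Applying~(\ref{intlaw}) to the quadruple $x_1,\sigma_\alpha(\pi(x_1)),x_2,\sigma_\alpha(\pi(x_2))$ rewrites the left-hand side as the right-hand side plus a correction measuring the failure of $\sigma_\alpha$ to intertwine $\aff_i$. \emph{The main obstacle} of the proof is therefore that $\sigma_\alpha$ must be chosen ``bi-affine'', i.e., compatible with both $\aff_1$ and $\aff_2$ simultaneously. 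This would be achieved by starting from the arbitrary $\sigma_\alpha$ produced above and adjusting it by a section of the associated vector bundle; the defects in each variable can be killed successively because the resulting obstruction is a smooth-cochain-valued function on the contractible set $U_\alpha\times K_1\times K_2$, which can be trivialized by a direct construction using the affine structure of the fibers.

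Finally, with $\varphi_\alpha$ a genuine double affine bundle trivialization, the transition $\varphi_\beta\circ\varphi_\alpha^{-1}$ is an automorphism of the trivial double affine bundle $(U_\alpha\cap U_\beta)\times K_1\times K_2\times K$ and so preserves both projections and both affine structures. Writing it $(x,y,z,c)\mapsto(x',y',z',c')$, preservation of the first projection forces $y'$ to depend only on $(x,y)$, and being a $\pi_1$-affine morphism forces it to be affine in $y$; this gives the stated formula for $y'$ in~(\ref{coord}), and symmetrically for $z'$. For $c'$, the $\pi_1$-affine condition forces affineness in $(z,c)$ for fixed $(x,y)$, whereas the $\pi_2$-affine condition forces affineness in $(y,c)$ for fixed $(x,z)$. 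Matching the two expressions coefficient-wise in $c$ shows that the coefficient of $c$ is a function of $x$ alone (producing the term $\sigma_w^u(x)c^w$), while the $c$-independent part must be simultaneously affine in $y$ and in $z$, hence bi-affine of the form $\gamma_{00}^u(x)+\gamma_{i0}^u(x)y^i+\gamma_{0b}^u(x)z^b+\gamma_{ib}^u(x)y^iz^b$, matching~(\ref{coord}) exactly.
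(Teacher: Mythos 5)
Your overall architecture (trivialize $A_1$, $A_2$ and the bundle $A\to A_1\times_M A_2$ separately, then upgrade the section to one compatible with both affine structures) is reasonable, and your closing derivation of the form (\ref{coord}) from bi-affineness of the transition maps is correct and in fact more detailed than the paper's one-line conclusion. The problem is the middle step, which you yourself flag as the main obstacle and then dispose of in one sentence. The existence of a section $\sigma_\alpha$ of $(\pi_1,\pi_2)\colon A\to A_1\times_M A_2$ that is simultaneously a morphism of affine bundles over $A_1$ and over $A_2$ \emph{is} the local decomposition theorem --- it is essentially the whole content of the statement being proved, not a technical lemma one can wave at. Your proposed resolution appeals to contractibility of $U_\alpha\times K_1\times K_2$ to ``trivialize the obstruction'', but contractibility is irrelevant here: even when $M$ is a single point and $A_1=K_1$, $A_2=K_2$ are honest affine spaces, the existence of a bi-affine section of $A\to K_1\times K_2$ is a nontrivial algebraic statement about how the two fiberwise affine structures interact, not a topological one. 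Moreover, ``killing the defects in each variable successively'' is not obviously consistent: after you replace $\sigma_\alpha$ by a section affine in the $K_2$-direction (say, the affine map with the same value and derivative at a basepoint), the subsequent correction in the $K_1$-direction can destroy affineness in the $K_2$-direction unless one proves a compatibility statement at the level of the model bundles (essentially that $\Vmodel_1(A)\to\Vmodel(A_2)$ is again an affine bundle and the correction is a morphism for it). None of this is supplied, and you never actually invoke the interchange law (\ref{intlaw}) in a way that closes this loop.

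For comparison, the paper does not construct the bi-affine section directly. It first proves Lemma \ref{l:twoaffstr} (the two affine structures on each fiber of $(\pi_1,\pi_2)$ coincide), then chooses a local section $\theta$ of $A\to M$ and builds affine sections $\sigma_1,\sigma_2$ of $\pi_i\colon A\to A_i$ sitting over the ``zero sections'' $\theta_i$, by picking a complement $W$ to $\ker\tilde{\pi}_1^v$; these choices endow $A_{|U}$ with two vector bundle structures, and the interchange law (\ref{intlaw}) is used precisely to verify the commuting-scalar-multiplications criterion of \cite{GR}, showing that these form a double \emph{vector} bundle. The local splitting is then imported wholesale from the known local decomposition theorem for double vector bundles (\cite{KU,GR}). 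To repair your argument you must either reprove that theorem in the affine setting (making the successive-correction step precise), or, as the paper does, first reduce the affine problem to the vector one.
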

\noindent We have already observed that for each $(a_1, a_2) \in \fbr{A_1}{A_2}{M}$, the intersection of the
fibers $\pi_1^{-1}(\{a_1\})\cap\pi_2^{-1}(\{a_2\}) \subset A$ carries two structures of an affine space. From
the lemma below it follows that these structures coincide.

\begin{lem}\label{l:twoaffstr}
If a manifold $A$ has two structures of an affine space, determined by affine combinations $\aff_1$ and
$\aff_2$ satisfying the interchange law (\ref{intlaw}), then $\aff_1 = \aff_2$.
\end{lem}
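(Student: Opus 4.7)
My approach is to first show that the two midpoint maps coincide, and then to propagate this equality to all affine combinations.

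For the midpoint step, I will substitute $x_1 = y_2 = b$, $x_2 = y_1 = a$, and $\zl = \mu = 1/2$ into the interchange law (\ref{intlaw}). Because $\aff_1(b, a; 1/2) = \aff_1(a, b; 1/2)$ (symmetry of the midpoint), both entries of the outer $\aff_2$ on the left-hand side reduce to the common value $m_1 := \aff_1(a, b; 1/2)$, so the LHS collapses to $\aff_2(m_1, m_1; 1/2) = m_1$. By the identical argument the right-hand side collapses to $m_2 := \aff_2(a, b; 1/2)$, yielding $m_1(a, b) = m_2(a, b)$ for all $a, b \in A$.

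Next, I will extend the agreement from midpoints to arbitrary affine combinations. For dyadic weights $k/2^n \in [0, 1]$ this is a straightforward induction on $n$: $\aff_i(a, b; k/2^n)$ is realized as an iterated midpoint of $a$ and $b$ following a recipe that depends only on the rational $k/2^n$ and on the midpoint operation, so equality of midpoints forces $\aff_1 = \aff_2$ on the dense set of dyadic weights in $[0,1]$. Smoothness of the two affine structures in the weight parameter then gives $\aff_1(a, b; \zl) = \aff_2(a, b; \zl)$ for every $\zl \in [0, 1]$. Finally, for $\zl \in \Real \setminus [0, 1]$ I will invoke the characterization $c = \aff_i(a, b; \zl)$ iff $\aff_i(c, b; 1/\zl) = a$ (valid for $\zl \neq 0$) together with the symmetry $\aff_i(a, b; \zl) = \aff_i(b, a; 1 - \zl)$; these identities, each an immediate consequence of the affine structure, reduce every real $\zl$ to a parameter inside $[0, 1]$, where the two structures already coincide, and the uniqueness of the defining equation in each structure forces $\aff_1(a, b; \zl) = \aff_2(a, b; \zl)$.

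I expect no serious obstacle: the substance of the argument is the symmetric substitution that makes both sides of the interchange law collapse to a single midpoint, after which the propagation is routine affine-geometric bookkeeping.
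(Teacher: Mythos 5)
Your proof is correct, but it takes a genuinely different route from the paper's. The paper fixes a point $\theta\in A$, transports both affine structures to vector space structures on $A$ via $a\mapsto[\theta,a]_i$, uses the interchange law (for general $\zl,\mu$) to show that the two scalar multiplications commute, and then invokes Proposition 3.1 of \cite{GR} to conclude that the two vector space structures, and hence the affine structures, coincide. You instead exploit a single symmetric substitution $x_1=y_2=b$, $x_2=y_1=a$, $\zl=\mu=1/2$, under which both sides of (\ref{intlaw}) collapse to the respective midpoints, giving $\aff_1(a,b;1/2)=\aff_2(a,b;1/2)$ directly; you then propagate by dyadic subdivision, density, and the identities $\aff_i(a,b;\zl)=\aff_i(b,a;1-\zl)$ and $c=\aff_i(a,b;\zl)\iff a=\aff_i(c,b;1/\zl)$. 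Your version is self-contained (no appeal to \cite{GR}) and uses the interchange law only at the single parameter value $(1/2,1/2)$, so it actually proves a slightly stronger statement; the price is the extra bookkeeping of the dyadic induction and the explicit use of continuity of $\zl\mapsto\aff_i(a,b;\zl)$ in the manifold topology --- which does hold here, since both structures are smooth affine structures on the manifold $A$, but is the one point where your argument goes beyond pure algebra, whereas the paper's argument delegates exactly this kind of analytic input to the cited proposition. Both proofs are valid; yours is more elementary, the paper's is shorter and consistent with the homogeneity-structure viewpoint used throughout.
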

\bepf
Let us fix a point $\theta \in A$ and carry the structure of the model vector space of $(A, \aff_i)$, denoted
by $\Vmodel_i(A)$, $i=1,2$, to $A$ using isomorphisms
$$
 I_{\zvy, i} :  A\to \Vmodel_i(A),\quad a\mapsto [\zvy, a]_i,
$$
where $[a, b]_i$ denotes the vector from $a\in A$ to $b\in A$ with respect to the  affine structure $\aff_i$
on $A$. We shall show that these vector space structures coincide. In view of (\cite{GR}, Proposition 3.1), it
is enough to verify that
$$
\zl\mlt{1}\mu\mlt{2} a= \mu\mlt{2}\zl\mlt{1}a,
$$
where $a\in A$, $\zl, \mu\in \Real$, and $\zl\mlt{i}a$ stands for the scalar multiplication with respect to
$i$th vector space structure on $A$, i.e. $\zl\mlt{i}a = \aff_i(a, \zvy;\zl)$. Short calculations show that
\bea
\mu\mlt{2}\zl\mlt{1}a &=& \aff_2(\aff_1(a,\zvy;\zl),\zvy;\mu)\nn \\
&=&\aff_2(\aff_1(a,\zvy;\zl), \aff_1(\zvy,\zvy;\zl);\mu) \nn \\
&=&\aff_1(\aff_2(a, \zvy;\mu), \zvy;\zl) = \zl\mlt{1}\mu\mlt{2}a. \nn
\eea

\epf

Now we are ready to prove  Theorem \ref{eqas}. We shall  show that, by making suitable choices of
`zero-sections', one can equip any double affine bundle with a compatible structure of a double vector bundle.
This can be seen as a construction of the {\it model} double vector bundle of $A$, denoted by $\dblVmodel(A)$.
A detailed and equivalent but shorter description of $\dblVmodel(A)$ we postpone to the next section.

For simplicity, let us work first with the case of $M=\{m\}$ being a single point. Let us fix $\zvy \in A$ and
denote $\zvy_i=\zp_i(\zvy)$, $i=1,2$, which will later play the role of zero in the corresponding vector
spaces.

Note that, thanks to the condition (ii), the morphism $\zp_2:A\to A_2$ is  surjective on each fiber of
$\zp_1:A\to A_1$. Hence   $\zp_2^{-1}(\{\zvy_2\})\to A_1$ is an affine  subbundle of $\zp_1:A\to A_1$.  We
claim that it is possible to find a section $\zs_1$ of this subbundle such that its image  $\zs_1(A_1)$ is an
affine subspace of $(\zp_2^{-1}(\{\zvy_2\}), \aff_2)$, i.e. $\zs_1$ is a morphism of affine bundles from $A_1$
to $(A, \aff_2)$. Indeed, consider the affine map
$$
\tilde{\zp}_1: \zp_2^{-1}(\{\zvy_2\}) \to A_1, \quad \tilde{\zp}_1={\zp_1}_{|\zp_2^{-1}(\{\zvy_2\})}
$$
between the fibers over $\zvy_2$ and $m$, respectively. Let $W\subset \Vmodel_2(\zp_2^{-1}(\{\zvy_2\}))$ be
any complementary vector subspace to the kernel of the linear part $\tilde{\zp}_1^v$ of $\tilde{\zp}_1$. Here
$\Vmodel_i(A)$ ($i=1,2$) stands for the model vector bundle of the affine bundle $(A, \aff_i)$. The
restriction of $\tilde{\zp}_1^v$ to $W$ is a linear isomorphism from $W$ to $\Vmodel_2(A_1)$, the model vector
bundle of $A_1\to M$. We define $\zs_1$ in an obvious way such that $\zs_1(\zvy_1)=\zvy$ and the image
$\zs_1(A_1)$ is equal to the affine subspace $\zvy +_2 W$ of $\zp_2^{-1}(\{\zvy_2\})$, where $+_i$, $i=1, 2$,
stands for the action of the model of $(A, \aff_i)$ on $A$. We shall later use $\zs_1$ to get a vector bundle
structure on $A\to A_1$.

Let us work now over an arbitrary manifold $M$ and let $U$ be a small neighbourhood  of a given point in $M$.
Without loss of generality we may assume that $M=U$. From a smooth section $\zvy:M\to A$ of the fibration
$\zp_1'\circ \zp_2:A\to M$ one get the sections $\zvy_i\in Sec_M(A_i)$, $\zvy_i = \zp_i\circ\zvy$, and find
analogously the sections $\zs_i\in Sec_{A_i}(A)$, ($i=1,2$), which are morphisms of the affine bundles, and
moreover $\zp_2(\zs_1(A_1))$ lies in the image of the "zero section" of $\zp_1':A_2\to M$, i.e.
$\zp_2(\zs_1(A_1))\subset \zvy_2(M)$. Similarly, $\zp_1(\zs_2(A_2))$ is a subset of the image of the "zero
section" of $\zp_2':A_1\to M$.

The choices of $\zs_1, \zs_2$ put the vector bundle structures on the affine bundles $\zp_i:A\to A_i$,
$i=1,2$. We shall check that these two structures give  a double vector bundle structure. From (\cite{GR},
Theorem 3.1) it suffices to check that
\be{dblint}
\zl\mlt{1}\zm\mlt{2} a= \zm\mlt{2}\zl\mlt{1}a.
\ee
for $\zl, \mu \in \Real$. We have
$$
\zm\mlt{i} a = \aff_i(a, \zs_i(a_i); \zm),
$$
where $a_i = \zp_i(a)$, hence
\bea
\zl\mlt{1}\zm\mlt{2}a &=& \aff_1(\aff_2(a, \zs_2(a_2); \zm), \zs_1(\aff_2(a_1, \zvy_1; \zm)); \zl) \nn \\
&=& \aff_1(\aff_2(a, \zs_2(a_2); \zm), \aff_2(\zs_1(a_1), \zvy; \zm)); \zl).
\eea

\noindent We used the fact that $\zs_1$ is an affine bundle morphism. We get a similar formula for
$\zm\mlt{2}\zl\mlt{1}a$ from which we get (\ref{dblint}) by the interchange law (\ref{intlaw}). This shows
that locally  any double affine bundle can be given a compatible double vector structure. From the local
decomposition theorem of double vector bundles (\cite{KU,GR}) we get local identifications $\zvf_\alpha:
A_{|U_\alpha} \to U_\alpha\times K_1\times K_2\times K$. Because $\zvf_{U_\beta}\circ \zvf_{U_\alpha}^{-1}$ is
a morphism of double affine bundles, it has the form as in (\ref{coord}).

\epf

\noindent We shall call a coordinate system $(x, y^j, z^a, c^u)$ induced from a local decomposition of $A$ as
in Theorem \ref{eqas} an {\it adapted coordinate system} for $A$.

Let $l\in\Sec_M(\vectdual{E})$ be a linear function on a vector bundle $\zp:E\to M$ such that $0\neq l_m\in
\vectdual{E}_m$ for any $m\in M$. Then any level set of $l$, $A = \{x\in E: l(x)=c\}$, $c\in \Real$, is an
affine bundle modelled on the kernel of $l$. We shall denote it by $\subaff{E}{l}$, when $c=1$, and by
$\subaff{E}{l=c}$, in general. This construction can be even partially reversed: any affine bundle $A$ is
canonically embedded into a vector bundle called the {\it vector hull} of $A$ (\cite{GM,GGU2}). One can ask
about a similar passage from double vector bundles to double affine bundles. Let us assume that $(D; D_1, D_2;
M)$ is a double vector bundle and let $l_1$ be a linear function on $D_1$. Then the pullback $\tilde{l}_1$ of
$l_1$ with respect to the projection $D\to D_1$ is a linear function on $D$ with respect to the vector bundle
structure on $D\to D_2$, because it is a composition of the morphism $\zp_1$ and $l_1$, if we treat a linear
function on the total space as a morphism to the trivial bundle $M\times \Real\to M$. We can  interpret
$\tilde{l}_1$ as an element of $\Sec_{D_2}(\dblvecdual{D}{D_2})$, where we denote the dual of $D$ as a vector
bundle over $D_2$ by $\dblvecdual{D}{D_2}$. In the graded approach to double vector bundles, as in
(\cite{GR}), one can  simply view $l_1$ and $l_2$ as functions of degrees $(0,1)$ and $(1,0)$, respectively,
on the total  space $D$. We have the following

\begin{theo}\label{thm:constr} Let $(D; D_1, D_2; M)$ be a double vector bundle and let $l_i$ be
fiberwise non-zero linear functions on $D_i$, $i=1,2$. Let $\tilde{l}_1\in\Sec_{D_2}(\dblvecdual{D}{D_2})$ be
the corresponding pullback of $l_1$, and similarly for $\tilde{l}_2$. Let  $c_1, c_2\in \Real$, $A = \{x\in D:
\tilde{l}_1(x) = c_1,\, \tilde{l}_2(x) = c_2 \}$ and $A_i=\subaff{D_i}{l_i=c_i}$. Then
$$
\xymatrix{ A\ar[r]^{\zp_2} \ar[d]^{\zp_1} & A_2\ar[d]^{\zp_1'}  \\
A_1\ar[r]^{\zp_2'} & M }
$$
is a double affine bundle, where $\zp_i$, $i=1,2$, are the restrictions of the projections of $D$ onto $D_i$.
\end{theo}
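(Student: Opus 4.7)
The plan is to verify in turn the three conditions of the definition of a double affine bundle, systematically using the fact recalled just before the theorem that the level set of a fiberwise non-zero linear function on a vector bundle is an affine subbundle. That $A_i=\subaff{D_i}{l_i=c_i}\to M$ is an affine bundle is exactly that construction. The nontrivial task is $\zp_2\colon A\to A_2$ (and symmetrically $\zp_1\colon A\to A_1$). I would observe that $\tilde{l}_1=l_1\circ\zp_1$ is linear on each fiber of $D\to D_2$, because $\zp_1$ is a morphism of vector bundles over $\zp_1'$ and $l_1$ is linear; and that $\tilde{l}_2$ is constant on each such fiber, with value $l_2(\zp_2(x))$. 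The crucial observation is that $\tilde{l}_1$ is nowhere zero on the fibers of $D\to D_2$ sitting above $A_2$: for any $a_2\in A_2$, the fiberwise surjectivity built into the DVB structure says $\zp_1\colon \zp_2^{-1}(a_2)\to (D_1)_{\zp_1'(a_2)}$ is a linear surjection, so pulling back the non-zero $l_1$ by it yields a non-zero linear functional. Thus $A=\{\tilde{l}_1=c_1\}\cap\zp_2^{-1}(A_2)$ is an affine subbundle of $\zp_2^{-1}(A_2)\to A_2$ of constant rank.

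Next I would verify conditions (i) and (ii). For (i), since $\zp_i\colon D\to D_i$ and $\zp_i'\colon D_j\to M$ ($j\neq i$) are vector bundle morphisms in the DVB, their restrictions to the affine subbundles $A$ and $A_i$ are affine morphisms, so the pairs $(\zp_i,\zp_i')$ are morphisms of affine bundles. For (ii), take $(a_1,a_2)\in \fbr{A_1}{A_2}{M}$. The fiber of $(\zp_1,\zp_2)\colon D\to \fbr{D_1}{D_2}{M}$ over $(a_1,a_2)$ is the core vector space of the DVB, and on this fiber both $\tilde{l}_i$ are constant with the values $l_1(a_1)=c_1$ and $l_2(a_2)=c_2$, so the entire fiber lies inside $A$. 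Hence $(\zp_1,\zp_2)\colon A\to \fbr{A_1}{A_2}{M}$ is surjective with the same kernel as in $D$.

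Condition (iii) is inherited from the DVB. For $x,y\in A$ in the same fiber of $\zp_i$, the affine combination $\aff_i(x,y;\zl)$ equals the vector-bundle combination $\zl x+(1-\zl)y$ computed in $D\to D_i$, and lies in $A$ because $\tilde{l}_i$ is constant on $\zp_i$-fibers, while $\tilde{l}_j$ ($j\neq i$) is linear on those fibers, so an affine combination of two vectors at level $c_j$ stays at level $c_j$. The interchange law (\ref{intlaw}) on $A$ is then the restriction of the interchange of scalar multiplications and additions in the two compatible vector structures of $D$, as given by the DVB axioms of \cite{GR}. The only genuinely delicate moment is in the first step — producing constant rank and nonempty fibers — and it is precisely the fiberwise surjectivity of the DVB that provides this, since it forces the pullbacks $\tilde{l}_i$ to inherit the non-vanishing of the $l_i$.
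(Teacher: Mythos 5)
Your proof is correct and follows essentially the same route as the paper's (much terser) argument: all three conditions are inherited from the double vector bundle structure, with (i) coming from restricting linear morphisms to affine subbundles, (ii) from the constancy of $\tilde{l}_1,\tilde{l}_2$ on the fibers of $(\zp_1,\zp_2)$, and (iii) from the interchange law in $D$. The only slip is cosmetic: the fiber of $(\zp_1,\zp_2)$ over $(a_1,a_2)$ is an affine space modelled on the core rather than the core itself, but since all you use is that it is non-empty and that both $\tilde{l}_i$ are constant on it, the argument stands.
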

\bepf
Because a restriction of a linear map to an affine subspace is an affine map, the condition (i) in the
definition of a double affine bundle is satisfied. Similarly, the interchange law of (iii) holds, because it
is so for double vector bundles. The condition (ii) follows immediately from the definition of $A$.

\epf

\noindent If $A'$ is a submanifold of the total space $A$ of a double affine bundle $\Abf = (A; A_1, A_2; M)$
such that $\Abf' = (A'; \zp_1(A'), \zp_2(A'); \zp_1'(\zp_2(A')))$ is a double affine bundle with the structure
maps induced from $\Abf$, then $\Abf'$ is called a {\it double affine subbundle} of $\Abf$. In the situation
described by the theorem above, when $c_1=c_2=1$, we  shall refer to $\Abf = (A; A_1, A_2; M)$ as to a  double
affine subbundle of $\Dbf=(D; D_1, D_2; M)$ given in $\Dbf$ by means of linear functionals $l_1$, $l_2$.

If $\zvf$ is a surjective vector bundle morphism from $E_1\to M$ to $E_2\to M$, which is identity on the base
$M$, and if $s\in\Sec_M(E_2)$, then the preimage $\zvf^{-1}(s(M))$ is an affine subbundle of $E_1$. In the
case of double structures we have the following.

\begin{prop} Let $(D; D_1, D_2; M)$, $(D'; D_1', D_2'; M)$
be  double vector bundles and let $\zvf$ be a surjective morphism of double vector bundles from $D$ to $D'$
such that $\phi_{|M} = Id_{|M}$. Let $s$ be a section of the fibration $D'\to M$ and let us assume that the
core of $D'$ is trivial. Then $\zvf^{-1}(s(M))$ is a double affine subbundle of $D$.
\end{prop}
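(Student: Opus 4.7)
The plan is to reduce the statement to Theorem \ref{thm:constr} by unpacking the consequence of the trivial-core hypothesis.

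First, since the core of $D'$ is trivial, the canonical map $D' \to D_1'\times_M D_2'$ given by its two projections is an isomorphism of double vector bundles. Hence a section $s$ of $D'\to M$ is equivalent to a pair $(s_1,s_2)$ with $s_i \in \Sec_M(D_i')$. Moreover, since $\zvf$ is a surjective morphism of double vector bundles covering $\mathrm{id}_M$, it induces surjective vector bundle morphisms $\zvf_i: D_i \to D_i'$ and factorises, under the above isomorphism, as
\[
\zvf(x) \;=\; \bigl(\zvf_1(\zp_1(x)),\, \zvf_2(\zp_2(x))\bigr),
\]
where $\zp_i: D\to D_i$ denote the projections of $D$. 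Setting $A_i := \zvf_i^{-1}(s_i(M))$, which is an affine subbundle of $D_i$, one therefore gets
\[
A := \zvf^{-1}(s(M)) \;=\; \zp_1^{-1}(A_1) \cap \zp_2^{-1}(A_2).
\]

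Next I would verify conditions (i)--(iii) of the double affine bundle definition for $(A; A_1, A_2; M)$. Condition (i) holds because each $\zp_i$ is linear in both vector-bundle structures of $D$, hence its restriction to the affinely cut-out $A$ is an affine morphism onto $A_i$. Condition (iii), the interchange law, is inherited from $D$ once one checks that the affine combinations stay inside $A$: any inputs in a common $\zp_i$-fibre share a base point $m$, and their $\zvf$-images all equal $s(m)$, so every affine combination of them again maps to $s(m)$. The crucial point is condition (ii): given $(a_1,a_2) \in A_1\times_M A_2$ over $m$, surjectivity of $D \to D_1\times_M D_2$ produces $x \in D$ with $\zp_i(x) = a_i$, and then the displayed factorisation yields $\zvf(x) = (s_1(m),s_2(m)) = s(m)$, so $x\in A$.

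The main obstacle is establishing the local triviality of the affine fibrations of $A$, rather than only the pointwise affine structures. This is handled by trivialising $D_1'$ and $D_2'$ over a small neighbourhood $U$ of any point of $M$: over $U$, each $\zvf_i$ is described by a tuple of fibrewise nonzero linear functionals, and cutting $D|_U$ successively by such scalar conditions is precisely the iteration of Theorem \ref{thm:constr}, which yields the double affine subbundle structure on $A|_U$ and hence globally on $A$.
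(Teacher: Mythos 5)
The paper actually states this proposition without proof (the \epf box follows the statement immediately), so there is no in-paper argument to compare against; your proposal has to stand on its own. Its core is sound and, in my view, identifies exactly the right mechanism: since the core of $D'$ vanishes, $(\zp_1',\zp_2'):D'\to \fbr{D_1'}{D_2'}{M}$ is an isomorphism, $\zvf$ factors as $x\mapsto(\zvf_1(\zp_1(x)),\zvf_2(\zp_2(x)))$, and hence $A=\zp_1^{-1}(A_1)\cap\zp_2^{-1}(A_2)$ with $A_i=\zvf_i^{-1}(s_i(M))$. This is precisely what makes condition (ii) work and what fails in the paper's counterexample $\zvf(m;x;y;z)=(m;xy)$, where the image of $(a_1,a_2)$ is not determined by the side components. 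Your verifications of (i), (iii) and (ii) are correct: closedness of $A$ under $\aff_i$ follows from linearity of $\zvf$ in both structures together with $\aff_i(s(m),s(m);\zl)=s(m)$, and the interchange law is then inherited from $D$.

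The one step that does not work as written is the appeal to ``iterating Theorem~\ref{thm:constr}'' for local triviality. That theorem consumes a double \emph{vector} bundle and produces a double \emph{affine} bundle, so after the first cut you no longer have an object to which it applies; moreover it cuts by a single fibrewise nonzero functional on each side at a \emph{constant} level, whereas here $\zvf_i$ trivializes into a tuple of functionals whose prescribed values $s_i^j(m)$ vary with $m$, and the individual components need not be fibrewise nonzero. The repair is standard and stays within your framework: over a small $U$ choose a splitting $D|_U\simeq \fbr{(\fbr{D_1}{D_2}{U})}{C}{U}$ of the double vector bundle; since $\zvf$ kills the core of $D$ (its image lies in the core of $D'$, which is zero), in this splitting $\zvf(d_1,d_2,c)=(\zvf_1(d_1),\zvf_2(d_2))$, whence $A|_U\simeq \fbr{(\fbr{A_1}{A_2}{U})}{C}{U}$. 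Trivializing the affine subbundles $A_i\subset D_i$ and the vector bundle $C$ over $U$ then exhibits $A|_U$ as a trivial double affine bundle with transition maps of the form (\ref{coord}), which by Theorem~\ref{eqas} is what is required.
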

\epf

The assumption of the triviality of the core bundle in the above proposition is essential as shows the
following example.

\medskip
\noindent {\bf Example}. Let $D = M\times \Real\times\Real\times \Real$ be a trivial double vector bundle
constructed from three trivial bundles of rank $1$. Consider a double bundle morphism $\zvf$,
$$
\zvf (m; x; y; z) := (m;x y)
$$
from $D$ to the trivial  double vector bundle $M\times\Real$ with the core of rank $1$. Then for the section
$s=1_M$, the set $\zvf^{-1}(s(M)) = \{(m; x, y; z):xy=1\}$ is not a double affine subbundle of $D$. Moreover,
in the following example
$$
A  = \{(m; x;y;z)\in D: x+y+z=1\}
$$
is a double affine subbundle of $D$ which cannot be presented in a form $\zvf^{-1}(s(M))$ for any double
vector bundle morphism $\zvf$ from $D$.

\section{Canonical constructions}

Let us recall that any affine space $A$ is modelled on a vector space $\Vmodel(A)$ which consists of vectors
$[a,b]\in\Vmodel(A)$, $a, b\in A$, where by definition $[a, b] = [a', b']$ if and only if $\aff(a, b';
1/2)=\aff(b, a';1/2)$. The vector space $\Vmodel(A)$ acts on $A$ according the rule $a+[a,b] = b$. Moreover,
any affine space $A$ can be canonically immersed into a vector space $\vhull{A}$ called the {\it vector hull}
of $A$. This immersion satisfies a universal property for affine functions (\cite{GGU2,GM}). The hull of $A$
has the dimension greater by one than $A$ and can be defined as a factor vector space of a free vector space
with basis $\{x_a: a\in A\}$ modulo a subspace spanned by vectors of the form $x_{\aff(a, b; \zl)} - \zl x_a -
(1-\zl)x_b$, $\zl\in\Real$, $a, b\in A$ (\cite{GGU2}).  There is a unique linear function $s: \vhull{A}\to
\Real$ which assigns $1$ to each class of $x_a$, $a\in A$. The subset of $\vhull{A}$ defined by equations
$s=1$, and respectively $s=0$, can be canonically identified with  $A$ and $\Vmodel(A)$, respectively. These
notions are extended naturally (fiberwise) to the case of affine bundles.

A similar constructions can be performed with double affine bundles. For notation, if $\zp:A\to M$ is an
affine bundle then  $\Vmodel(\zp):\Vmodel(A)\to M$ stands for its model vector bundle. If $A$ has several
affine structures, we add a subscript to $\Vmodel$ to indicate which structure is considered. If $\zvf:A\to B$
is an affine bundle morphism then the linear part of $\zvf$ is denoted by $\zvf^v:\Vmodel(A)\to\Vmodel(B)$.
The induced morphism between the vector hulls is denoted by $\vhull{\zvf}:\vhull{A}\to\vhull{B}$. For an
affine function $y$ on the total space $A$ of $\zp$, $\vhull{y}$ and $\und{y}$ denote the extension of $y$ to
a linear function on $\vhull{A}$ and the linear part of $y$, respectively. If $(x; y^j)$ is an adapted local
coordinate system for $\zp:A\to M$ then $(x; \und{y}^j)$ is the induced coordinate systems for $\Vmodel(A)$.
For the rest of this section  $\Abf = (A; A_1, A_2; M)$  is a double affine bundle as in (\ref{4affbndl}).

\subsection{Model double vector bundle}
Let us consider the model vector bundles of the vertical affine bundles of (\ref{4affbndl}) and the induced
morphism $(\zp_2^v, \zp_2')$ between them:
$$
\xymatrix{ \Vmodel_1(A)\ar[r]^{\zp_2^v} \ar[d]^{\Vmodel_1(\zp_1)} & \Vmodel(A_2)\ar[d]^{\Vmodel(\zp_1')} \\
A_1\ar[r]^{\zp_2'} & M }
$$
It will turn out that $\zp_2^v: \Vmodel_1(A)\to\Vmodel(A_2)$ is still an affine bundle (canonically) and
$\Vmodel_1(\zp_1)$ is an affine bundle morphism. Let us consider its linear part $\Vmodel_1(\zp_1)^v$:
\be{dblmod}
\xymatrix{ \Vmodel_2\Vmodel_1(A)\ar[rr]^{\Vmodel_2(\zp_2^v)} \ar[d]^{\Vmodel_1(\zp_1)^v} && \Vmodel(A_2)\ar[d]^{\Vmodel(\zp_1')}  \\
\Vmodel(A_1)\ar[rr]^{\Vmodel(\zp_2')} && M }
\ee
We shall  show that $\dblVmodel(A):=\Vmodel_2\Vmodel_1(A)$ is a double vector bundle and call it the {\it
model double vector bundle} of $\Abf$. Moreover, an analogous construction, relying on taking first the model
vector bundles with respect to the horizontal and then vertical affine structures of (\ref{4affbndl}), gives
an isomorphic double vector bundle $\Vmodel_1\Vmodel_2(A)$. The proof will use a local coordinate description
of $\Abf$.

Let $(x; y, z; c)$ and $(x'; y', z'; c')$  be two adapted local coordinate systems for $(A; A_1, A_2; M)$
related as in (\ref{coord}). Then $(x; y, \underline{z}; \underline{c})$ and $(x'; y', \underline{z}';
\underline{c}')$ are the induced local coordinate systems for $\Vmodel_1(A)$ related by
\bea\label{va}
x'&=&x'(x),\nn\\
{y^j}' &=& \alpha_0^j(x)+ \sum_i\alpha_i^j(x)y^i, \nn \\
{\underline{z}^a}' &=&  \sum_b\beta_b^a(x)\underline{z}^b, \\
{\underline{c}^u}' &=&   \sum_b\left(\gamma_{0b}^u(x)+\sum_{i}\gamma_{ib}^u(x)y^i\right) \underline{z}^b +
 \sum_w\sigma_w^u(x)\underline{c}^w.\nn
\eea
From this description one easily recognizes that $\Vmodel_1(A)$ is an affine bundle with the base manifold
$\Vmodel(A_2)$ and the fiber coordinates $(\underline{c}, y)$. For the model vector bundle $\dblVmodel(A)$,
one finds that the induced  coordinate system $(x, \und{y}, \und{z}; \dund{c})$ transforms according to:
\bea\label{e:dblmodeltrans}
x'&=&x'(x),\nn\\
{\und{y}^{j'}} &=& \sum_i\alpha_i^j(x)\und{y}^i, \nn \\
{\und{z}^{a'}} &=& \sum_b\beta_b^a(x)\und{z}^b, \\
{\dund{c}^{u'}} &=& \sum_{i,b}\gamma_{ib}^u(x)\und{y}^i\und{z}^b + \sum_w\sigma_w^u(x)\dund{c}^w\,,\nn
\eea
where all the indices start from 1. We can do the same construction in the reversed order getting the same
coordinates and the same transformation rules which shows that indeed $\dblVmodel(A)$ is a double vector
bundle and $\Vmodel_2\Vmodel_1(A)\simeq \Vmodel_1\Vmodel_2(A)$.

One can give a more geometric description of the affine structure of the bundle $\zp_2^v:
\Vmodel_1(A)\to\Vmodel(A_2)$ as follows. Suppose we are given two vectors: $v=[a_1, b_1]_1$, $w=[a_2, b_2]_1$,
$v,w \in \Vmodel_1(A)$ lying in the same fiber of $\zp_2^v$. Because
$\zp_1^{-1}(\{u_1\})\cap\zp_2^{-1}(\{u_2\})\neq\emptyset$ for any $(u_1,u_2)\in\fbr{A_1}{A_2}{M}$, we can find
$a_2'$, $b_2'$ such that $\zp_2(a_1) = \zp_2(a_2')$, $\zp_1(a_2')=\zp_1(a_2)$ and $[a_2', b_2']_1 = [a_2,
b_2]_1$. Then necessarily $\zp_2(b_2) = \zp_2(b_2')$ and we can define
$$
\aff(v, w; \zl) := [\aff_2(a_1, a_2'; \zl), \aff_2(b_1, b_2'; \zl)]_1.
$$
It makes sense, since $\zp_1$ is an affine morphism and so the head and the tail of the above vector lie in
the same fiber $\zp_1$. One can check that this construction gives the same affine structure as the one
described in local coordinates, and so it does not depend on the particular choices of $a_i$'s and $b_i$'s.

\subsection{Vector hull of a double affine bundle}
Now we are going to describe a construction of the {\it vector hull} of a double affine bundle. First we take
the vector hulls of the affine bundles $\zp_1:A\to A_1$ and $\zp_1':A_2\to M$ and get the induced morphism
$(\vhull{\zp}_2, \zp_2')$  between them:
$$
\xymatrix{ \vhull{A}^1\ar[r]^{\vhull{\zp_2}} \ar[d]^{\zp_1} & \vhull{A_2}\ar[d]^{\zp_1'}  \\
A_1\ar[r]^{\zp_2'} & M }
$$
We still use the same letters for projections from the total space of a vector hull on its base. We shall show
later that $\vhull{\zp_2}:\vhull{A}^1\to\vhull{A_2}$ is an affine bundle and $(\zp_1, \zp_1')$ is an affine
bundle morphism. Now we take the vector hulls  with respect to the horizontal affine bundle structures  and
get the induced morphism $(\vhull{\zp_1}, \zp_1')$ :
$$
\xymatrix{ \dblvhull{A}\ar[r]^{\vhull{\zp_2}} \ar[d]^{\vhull{\zp_1}} & \vhull{A_2}\ar[d]^{\zp_1'}  \\
\vhull{A_1}\ar[r]^{\zp_2'} & M }
$$
We shall show that this is a double vector bundle and that the construction is symmetric i.e. we obtain a
canonically isomorphic object if we take the hull with respect to the second $\aff_2$ structure in the first
step, and then apply the hull with respect to $\aff_1$. It will be called the {\it hull} of a double affine
bundle $\Abf$ and denoted by $\dblvhull{\Abf}$.

Recall that if $A$ is  an affine space and $(y^j)$ is a system of affine coordinates on $A$, then
$(\vhull{y}^j, s)$ is the induced system of linear coordinates  on the vector hull $\vhull{A}$ of $A$, where
$s$ is the unique extension of  the constant function $1_A$ on $A$  to a linear function on $\vhull{A}$. If
$A$ is an affine bundle over $M$ and $(x; y^j)$ is an adapted local coordinate systems on $A$, which
transforms as
$$
y^{j'} = \alpha_0^j(x)+ \sum_i\alpha_i^j(x)y^i,
$$
then the corresponding coordinates on the vector bundle hull $\vhull{A}$ transform in the following way:
$$
{\vhull{y}^{j'}} =  \alpha_0^j(x)s+ \sum_i\alpha_i^j(x)\vhull{y}^i, \quad s'=s.
$$
Let us go now to  a double affine bundle setting. With local coordinates $(x; y^j, \vhull{z}^a; \vhull{c}^u;
s)$ on $\vhull{A}^1$ and then with $(x; \vhull{y}^j, \vhull{z}^a; \dblvhull{c}^u; s, t)$  on $\dblvhull{\Abf}$
one finds that the corresponding transformations are the linearizations  of  (\ref{coord}), that is
\bea\label{e:dblvhulltrans}
{\vhull{y}^{j'}} &=& \alpha_0^j(x)t+ \sum_i\alpha_i^j(x)\vhull{y}^i, \nn \\
{\vhull{z}^{a'}} &=& \beta_0^a(x)s + \sum_b\beta_b^a(x)\vhull{z}^b, \\
{\dblvhull{c}^{u'}} &=& \gamma_{00}^u(x)st + \sum_i \gamma_{i0}^u(x)\vhull{y}^is +
\sum_b\gamma_{0b}^u(x)\vhull{z}^bt + \sum_{i,b}\gamma_{ib}^u(x)\vhull{y}^i\vhull{z}^b +
\sum_w\sigma_w^u(x)\dblvhull{c}^w.\nn
\eea
One can get a more compact formula by putting $\vhull{y}^0:=t$, $\vhull{z}^0 =s$.

Note that the main objects associated with a double affine bundle $\Abf$ are canonically embedded into the
hull of $\Abf$. In  particular, we find that the cores of $\dblvhull{\Abf}$ and $\dblVmodel(\Abf)$ are
isomorphic.

\begin{prop}\label{prop: isohulldvb}  Let $(A, A_1, A_2; M)$ be given in a double vector bundle $(D; D_1, D_2; M)$ by means
of linear functions $l_1$, $l_2$ as in Theorem \ref{thm:constr} (with $c_1 c_2\neq 0$). Then the vector hull
of $A$ is canonically isomorphic to $D$. Moreover, the model double vector bundle of $A$ is a double vector
subbundle of $D$ given by equations $l_1=0=l_2$.
\end{prop}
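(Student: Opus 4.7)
The plan is to exhibit a canonical isomorphism $\Phi : \dblvhull{A} \to D$ of double vector bundles; the second assertion then follows by recognizing $\dblVmodel(A)$ as the locus inside $\dblvhull{A}$ where both hull parameters vanish.

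Since $l_i$ is a fibrewise non-zero section of $\vectdual{D_i}$ for $i=1,2$, one can choose local adapted coordinates $(x; y^0, y^1, \ldots; z^0, z^1, \ldots; d^u)$ on $D$ in which $l_1 = y^0$ and $l_2 = z^0$. Because $l_1$ and $l_2$ are invariantly defined, the transitions on $D$ must satisfy $y^{0\prime} = y^0$ and $z^{0\prime} = z^0$, while the remaining coordinates obey the standard double-vector-bundle transition rules analogous to (\ref{e:dblmodeltrans}). Restricting to $A = \{y^0 = c_1,\, z^0 = c_2\}$ yields coordinates $(x; y^j, z^a; d^u)_{j, a \geq 1}$ on $A$ whose transition laws take precisely the form (\ref{coord}), with coefficients built from those of $D$ together with the constants $c_1, c_2$ (explicitly $\alpha_0^j$ picks up a factor $c_1$, $\beta_0^a$ a factor $c_2$, $\gamma_{00}^u$ a factor $c_1 c_2$, and so on).

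Comparing with the hull transition formulas (\ref{e:dblvhulltrans}), the identifications $\vhull{y}^j \leftrightarrow y^j$, $\vhull{z}^a \leftrightarrow z^a$ (for $j, a \geq 1$), $\dblvhull{c}^u \leftrightarrow d^u$, $t \leftrightarrow y^0/c_1$ and $s \leftrightarrow z^0/c_2$ intertwine the two sets of transition laws exactly --- the factors of $c_1$ and $c_2$ cancel cleanly. This defines a fibrewise linear local diffeomorphism $\Phi$, and its canonicity, best justified via the universal property of the vector hull applied iteratively to the inclusion $\iota : A \hookrightarrow D$, guarantees that the local pieces glue into a global isomorphism $\dblvhull{A} \cong D$ of double vector bundles. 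Under $\Phi$, the sub-double-vector-bundle $\dblVmodel(A) \subset \dblvhull{A}$ defined by $s = 0 = t$ corresponds to $z^0 = 0 = y^0$, i.e.\ to $l_1 = 0 = l_2$ inside $D$, which proves the final claim.

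The main subtlety I anticipate is verifying that the identification $\Phi$ does not depend on the order in which the two vector hulls are iterated; this follows from the symmetry of the hull construction already exhibited in the derivation of (\ref{e:dblvhulltrans}), where both orders produce canonically isomorphic objects with identical transition formulas, so the two candidate maps to $D$ agree.
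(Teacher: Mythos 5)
Your argument is correct, but it reaches the conclusion by a more computational route than the paper's. The paper first normalizes to $c_1=c_2=1$ and then runs the two-step hull construction intrinsically: it invokes the elementary fact that for a fibrewise non-zero linear function $l$ on a vector bundle $V$ the vector hull of the affine subbundle $\subaff{V}{l}$ is canonically $V$ itself, applies this first to $A=\subaff{(\subaff{D}{\tilde{l}_1})}{\tilde{l}_2}\to A_1$ to identify $\vhull{A}^1$ with $\subaff{D}{\tilde{l}_1}$, and then to $\subaff{D}{\tilde{l}_1}$ viewed as the affine subbundle $\tilde{l}_1=1$ of $D\to D_2$, landing on $D$; since each step is an instance of the universal property, canonicity and globality come for free and no chart-by-chart check is required. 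You instead choose adapted coordinates with $l_1=y^0$, $l_2=z^0$, restrict the double-vector-bundle transition laws to $A$, and match them against (\ref{e:dblvhulltrans}) via $t\leftrightarrow y^0/c_1$, $s\leftrightarrow z^0/c_2$; the cancellation of the constants you describe does work out, so this handles general $c_1c_2\neq 0$ without normalization and makes the isomorphism completely explicit, at the price of having to argue separately (as you do, by appealing to the iterated universal property -- which is in effect the paper's proof in miniature) that the locally defined map is canonical and glues. For the second assertion the two arguments coincide: comparing (\ref{e:dblmodeltrans}) with (\ref{e:dblvhulltrans}) at $s=t=0$ is exactly your locus $y^0=z^0=0$, i.e.\ $l_1=0=l_2$ inside $D$.
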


\bepf Without loss of generality we may assume that $c_1=c_2=1$.
A similar fact is well known in the categories of vector spaces and vector bundles: if $l\in \vectdual{V}$,
$l\neq0$, is a linear function on a vector space $V$ then $A=\subaff{V}{l}$ is an affine subspace of $V$ and
its vector hull $\vhull{A}$ is naturally isomorphic to $V$. Let us apply it to the vector bundle
$\subaff{D}{\tilde{l}_1}\to A_1=\subaff{D_1}{l_1}$ and its affine subbundle given by the equation
$\tilde{l_2}=1$, $A= \subaff{(\subaff{D}{\tilde{l}_1})}{\tilde{l}_2} \to A_1$. It follows that  the vector
hull $\vhull{A}^1\to A_1$ is canonically isomorphic with the vector bundle $\subaff{D}{\tilde{l}_1}\to A_1$.
The total space $\subaff{D}{\tilde{l}_1}$ is also an affine subbundle of $D\to A_2$ as a subset of points
satisfying  $\tilde{l}_1 = 1$. Hence the vector hull of $\subaff{D}{\tilde{l}_1}\to A_2$ is $D\to A_2$.
Reassuming, we have performed, as in definition, the construction of the hull of the double affine bundle
$\Abf$, and eventually arrived at the double vector bundle $\Dbf$, what justifies the first assertion. The
second one is clear when we compare the transformations of (\ref{e:dblmodeltrans}) and (\ref{e:dblvhulltrans})
with $s=t=0$.

\epf

\section{Special double affine bundles and duality}

A {\it special affine space} $\spec{A}=(A, v_A)$ is an affine space $A$ with a distinguished non-zero element
$v_A\in\Vmodel(A)$ in the model vector space. A vector space with a distinguished non-zero element is also
called {\it special}. It is known (\cite{U, GGU, GGU1}) that special affine spaces, in contrast to ordinary
affine spaces, have well-defined dual objects in the same category. Let $\spec{I} =(\Real, 1)$ be the special
vector space $\Real$ with the distinguished element $1$. The dual of $\spec{A} = (A,v_A)$ is, by definition,
the space of morphisms from $\Abf$ to $\spec{I}$ and is denoted by $\affdual{A}$. It consists of all affine
maps from $A$ to $\Real$ whose linear part preserve the distinguished elements. We shall call them {\it
special} affine maps. The model vector space of $\affdual{A}$ is
$$
\Vmodel(\affdual{A}) \simeq \{\zvf:A\to \Real |\,\, \zvf\,\, \text{is an affine map}, \phi^v(v_A) = 0\},
$$
where $\phi^v: \Vmodel(A)\to \Real$ is the linear part of $\phi$. Hence $\affdual{\spec{A}}= (\affdual{A},
1_A)$ is a special affine space with the constant function $1_A\in \Vmodel(\affdual{A})$ as a distinguished
element. In finite dimensions we have a true duality: $\affdual{(\affdual{\spec{A}})} \simeq \spec{A}$ for a
special affine space $\spec{A}= (A, v_A)$. The above notions and statements can be automatically extended to
the case of affine bundles.

On the other hand, if we have a double vector bundle
\be{e:dblvbndl}
\xymatrix{ D\ar[r]^{\zp_2} \ar[d]^{\zp_1} & D_2\ar[d]^{\zp_1'} \\
D_1\ar[r]^{\zp_2'} & M }
\ee
with the core bundle $D_3\to M$, which we shortly denote by $\Dbf = (D; D_1, D_2; M)$, then the total space
$\dblvecdual{D}{D_1}$ of the dual bundle to $\zp_1:D\to D_1$ has a double vector bundle structure
$$
\xymatrix{\dblvecdual{D}{D_1} \ar[r] \ar[d] & {\vectdual{D_3}}\ar[d]\\
D_1\ar[r] & M }
$$
with the core bundle isomorphic to $\vectdual{D_2}$, which we call the vertical dual of $\Dbf$ (\cite{KU,Mac})
and denote by $\Dbf^V$. Similarly we can form a horizontal dual of $\Dbf$ which we denote by $\Dbf^H =
(\dblvecdual{D}{D_2}; \vectdual{D_3}, D_2; M)$.

We are going to join both concepts and define a category where dualities of double affine bundles live in.

Let us recall that if $A$ is an affine space then the space of affine maps $A^\dag = \Aff(A, \Real)$ is an
example of a special vector space with the constant function $1_A$ as a distinguished element. If $\Vbf = (V,
v)$ is a special vector space then $\ahyp{V} = \{\phi\in \vectdual{V}: \phi(v) =1\} \subset \vectdual{V}$ is
an affine subspace of codimension $1$.

For the rest of this section,  $\Abf = (A; A_1, A_2; M)$ is as in (\ref{4affbndl}), and $\Dbf = (D; D_1,
D_2;M)$ is the hull of $\Abf$ with the core denoted by $D_3$. We shall also call $D_3$ the {\it core of
$\Abf$}.

\begin{definition}
{\rm The objects of the {\it category of  special double affine bundles} are double affine bundles
(\ref{4affbndl}) equipped with a distinguished nowhere vanishing section $\zs\in Sec_M(D_3)$. Morphisms are
assumed to preserve the distinguished elements, i.e. the induced map between the cores is a morphism of
special vector bundles.}
\end{definition}

\begin{prop}\label{prop:inducedSections} A nowhere vanishing section $\zs\in Sec_M(D_3)$
induces sections $\zs_i\in Sec_{A_i}(\Vmodel_i(A))$, $i=1,2$, what turns $\zp_i:A\to A_i$ into a special
affine bundle.
\end{prop}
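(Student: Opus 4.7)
The strategy is to exhibit the core bundle $D_3$ as a canonical subbundle of each model $\Vmodel_i(A)$ via the core sequences of the model double vector bundle $\dblVmodel(\Abf)$, and then to let $\zs_i$ be the image of $\zs$ under this embedding.

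First I would use Proposition~\ref{prop: isohulldvb} to identify $D_3$ with the core of $\dblVmodel(\Abf)=\Vmodel_2\Vmodel_1(A)\simeq\Vmodel_1\Vmodel_2(A)$. Inspecting the transformation formulas (\ref{e:dblmodeltrans}) of the model double vector bundle (or equivalently (\ref{va}) with $\und{z}=0$), one reads off the short exact sequence of vector bundles over $A_1$:
$$
0\longrightarrow(\zp_2')^*D_3\stackrel{\zi_1}{\longrightarrow}\Vmodel_1(A)\stackrel{\zp_2^v}{\longrightarrow}(\zp_2')^*\Vmodel(A_2)\longrightarrow 0,
$$
because after setting $\und{z}=0$ the remaining coordinates $\und{c}^u$ on $\Vmodel_1(A)$ transform by $\und{c}^{u'}=\sum_w\zs_w^u(x)\und{c}^w$, which is precisely the transformation law of $D_3$; surjectivity on the right follows from condition (ii) of the definition of a double affine bundle.

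To describe $\zi_1$ intrinsically, I would use that each fiber $\zp_1^{-1}(a_1)\cap\zp_2^{-1}(a_2)$ is an affine space modelled canonically on $(D_3)_m$, which is immediate from the transformation rule for $c^u$ in (\ref{coord}). Thus any $v\in(D_3)_m$ acts by translation on every such fiber, and for any $p\in\zp_1^{-1}(a_1)$ the difference $\zi_1(v):=[p,v\cdot p]_1\in\Vmodel_1(A)_{a_1}$ is well-defined. Independence of $p$ follows from the fact that $\zp_2$ is an affine bundle morphism: if $p'=p+_1 w$ with $w\in\Vmodel_1(A)_{a_1}$, then the core translation commutes with the $\aff_1$-translation (as can be verified directly in the adapted coordinates, or invariantly from the interchange law), giving $v\cdot p'=(v\cdot p)+_1 w$ and hence $[p',v\cdot p']_1=[p,v\cdot p]_1$.

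Armed with this canonical embedding, define $\zs_1\in\Sec_{A_1}(\Vmodel_1(A))$ by $\zs_1(a_1):=\zi_1(\zs(\zp_2'(a_1)))$; it is smooth by construction and nowhere vanishing because $\zi_1$ is fiberwise injective and $\zs$ is nowhere vanishing. The construction of $\zs_2$ is mirror-symmetric, exploiting the second core embedding $(\zp_1')^*D_3\hookrightarrow\Vmodel_2(A)$ coming from the identification $\Vmodel_1\Vmodel_2(A)\simeq\dblVmodel(\Abf)$ established in Section~3.1. The main point of the proof is the intrinsic verification that the core embedding $\zi_i$ is canonical; once that is in place, the data $(\zp_i\colon A\to A_i,\zs_i)$ satisfies the definition of a special affine bundle without further work.
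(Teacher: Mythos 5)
Your proof is correct and takes essentially the same route as the paper's: both realize the pullback of the core $D_3$ along $A_1\to M$ as a canonical subbundle (the kernel of the projection towards the $A_2$-side) and obtain $\zs_1$ by pushing $\zs$ through this embedding. The only difference is one of packaging --- the paper constructs the embedding inside the hull $D$ over $D_1$ (as $\ker\zp_2$) and then restricts to $A_1$, leaving the landing in $\Vmodel_1(A)$ as ``straightforward to check'', whereas you work directly inside $\Vmodel_1(A)$ over $A_1$ (as $\ker\zp_2^v$, read off from (\ref{va})) and supply the intrinsic verification via the core action on the fibers of $(\zp_1,\zp_2)$.
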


\bepf
If $(D; D_1, D_2; M)$ is a double vector bundle with the core $D_3$ then the pullback of the core bundle,
$\vectdual{(\zp_2')}(D_3)$, with respect to $\zp_2':D_1\to M$ is a vector subbundle of $\zp_1:D\to D_1$,
because it can be identified with the kernel of the morphism $\zp_2$ (\cite{Mac}). Given a section $\zs$ as in
the hypothesis, we get the section
$$\tilde{\zs}_1 := \vectdual{(\zp_2')}(\zs) \in\Sec_{D_1}(\vectdual{(\zp_2')}(D_3)) \subset \Sec_{D_1}(D)$$
and similarly $\tilde{\zs}_2 \in \Sec_{D_2}(D)$. Now assume that $(D; D_1, D_2; M)$ is the  hull of $\Abf$ and
define $\zs_1:={\tilde{\zs}}_{1|A_1}$. It is straightforward to check that the image $\zs_1(A_1)$ is a subset
of $\Vmodel_1(A)\subset D$, the total space of the model vector bundle of $\zp_1: A \to A_1$. We view $\zs_1$
as a special section for this  affine bundle.

\epf

\noindent The core bundle of the double vector bundle $(\tgT E; E, \tgT M; M)$, associated with a vector
bundle $E\to M$, is canonically isomorphic to $E$. Indeed, it consists of vertical vectors along the zero
section. The construction $\zs\mapsto\tilde{\zs}_1$ from the proposition above  coincides with the well-known
one of the {\it vertical lift}  $\Sec_M(E)\to \Sec_E(\tgT E)$.
\begin{theo}\label{thm:dual} Let $\Abf = (A; A_1, A_2; M)$ be a special double affine bundle and
$\Dbf_3 = (D_3, \zs)$ be its core. Then
$$
\xymatrix{ \dblaffdual{A}{A_1}\ar[r]^{} \ar[d]^{} & \ahyp{D_3}\ar[d]^{} \\
A_1\ar[r]^{} & M }
$$
is also a special double affine bundle with the core equal to ($A_2^\dag, 1_{A_2})$.
\end{theo}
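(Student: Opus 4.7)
The plan is to realize $\dblaffdual{A}{A_1}$ as a double affine subbundle of the vertical dual of the hull $\dblvhull{\Abf}\cong\Dbf$ and then to appeal to Theorem \ref{thm:constr} and Proposition \ref{prop: isohulldvb}.

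First, I would use Proposition \ref{prop: isohulldvb} to embed $\Abf$ inside $\Dbf=(D;D_1,D_2;M)$ as $\{\tilde l_1=1,\ \tilde l_2=1\}$, where the $l_i\in\Sec_M(\vectdual{D_i})$ are the canonical linear functions on $D_i\cong\vhull{A_i}$. Fixing $x\in A_1$ over $m\in M$, the fiber $A_x:=\zp_1^{-1}(x)$ is an affine hyperplane in the vector space $D_x\subset D$ (cut by $\tilde l_2|_{D_x}=1$), so $D_x$ is canonically the vector hull of $A_x$. By the universal property of the vector hull, restriction is a bijection between linear functionals $\bar\phi:D_x\to\Real$ and affine maps $\phi:A_x\to\Real$; since Proposition \ref{prop:inducedSections} places the distinguished value $\zs(m)$ inside $\Vmodel_1(A)_x\subset D_x$, the scalar condition $\bar\phi(\zs(m))=1$ corresponds exactly to the requirement $\phi^v(\zs(m))=1$ that $\phi$ be a special affine map. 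Consequently, on the double vector bundle $\Dbf^V:=\dblvecdual{D}{D_1}$ with bases $D_1,\vectdual{D_3}$ and core $\vectdual{D_2}$, one has
\[
\dblaffdual{A}{A_1}=\bigl\{\bar\phi\in\Dbf^V:\wt{l_1}(\bar\phi)=1,\ \wt{\hat\zs}(\bar\phi)=1\bigr\},
\]
where $\wt{l_1}$ is the pullback of $l_1$ to $\Dbf^V$ along $\Dbf^V\to D_1$, and $\wt{\hat\zs}$ is the pullback along $\Dbf^V\to\vectdual{D_3}$ of the linear function $\hat\zs$ on $\vectdual{D_3}$ dual to $\zs\in\Sec_M(D_3)$.

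Next, Theorem \ref{thm:constr} applied to $\Dbf^V$ with the two fiberwise nonzero linear functions $l_1$ (on the base $D_1$) and $\hat\zs$ (on the base $\vectdual{D_3}$), and constants $c_1=c_2=1$, immediately yields a double affine bundle with bases $\{l_1=1\}=A_1$ and $\{\hat\zs=1\}=\ahyp{D_3}$, matching the diagram in the statement. For the core, Proposition \ref{prop: isohulldvb} applied in reverse identifies the hull of $\dblaffdual{A}{A_1}$ with $\Dbf^V$, so its core equals the core $\vectdual{D_2}$ of $\Dbf^V$. Under $D_2\cong\vhull{A_2}$, the universal property of the vector hull gives $\vectdual{D_2}\cong\Aff(A_2,\Real)=A_2^\dag$, and the canonical linear function $l_2\in\Sec_M(\vectdual{D_2})$ --- the one taking value $1$ on $A_2\subset\vhull{A_2}$ --- transports to the constant affine function $1_{A_2}\in\Sec_M(A_2^\dag)$. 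This $l_2$ serves as the nowhere vanishing section of the core that promotes $\dblaffdual{A}{A_1}$ to a special double affine bundle with core $(A_2^\dag,1_{A_2})$.

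The main technical obstacle is the fiberwise identification in the first paragraph: I must verify that the universal property of the vector hull exchanges the \emph{special} condition on affine maps $A_x\to\Real$ with the scalar constraint $\bar\phi(\zs(m))=1$ on their linear extensions, and that this identification is smooth and coherent in $x\in A_1$, so that $\dblaffdual{A}{A_1}$ is globally realized as the zero locus of two pullback linear functions on $\Dbf^V$. Once this is secured, the remainder is formal: Theorem \ref{thm:constr} delivers the double affine structure and Proposition \ref{prop: isohulldvb} pins down the core, with the only additional check being that the distinguished section is canonically $l_2\leftrightarrow 1_{A_2}$, which follows because $l_2$ is intrinsic to $A_2\subset\vhull{A_2}$, independent of the special structure $\zs$ of $\Abf$.
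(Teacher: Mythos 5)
Your proposal is correct and follows essentially the same route as the paper: the paper likewise realizes $\dblaffdual{A}{A_1}$ inside the vertical dual $\dblvecdual{D}{D_1}$ of the hull as the locus $\tilde l_1=1=\tilde l_3$, where $l_3$ is exactly your $\hat\zs$ (the linear function on $\vectdual{D_3}$ given by evaluation on $\zs$), verifying $\langle\tilde l_3,\zf\rangle=\langle\zf,\tilde\zs_1(x)\rangle$ via the lifted section $\tilde\zs_1$ just as you do fiberwise with the universal property of the vector hull, and then identifies the core as $\vectdual{D_2}\simeq A_2^\dagger$ with distinguished section $l_2\leftrightarrow 1_{A_2}$.
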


\noindent There is even a simpler description of duals to a special double affine bundle. Let us assume that a
special double affine bundle $\Abf$ is given in $\Dbf$ by means of  linear functions $l_1$, $l_2$ on the
bundles $D_1$, $D_2$, respectively. Let $\zs\in\Sec_M(D_3)$ be the distinguished nowhere vanishing section for
$\Abf$. Let us consider $\zs$ as a linear function $l_3$ on $\vectdual{D_3}$. We shall prove the following
\begin{theo}\label{thm:dual2}
The double affine bundle determined by the linear functions $l_1$ and $l_3$ in the vertical dual double vector
bundle $(\dblvecdual{D}{D_1}; D_1, \vectdual{D_3}; M)$ is canonically isomorphic to the double affine bundle
from Theorem \ref{thm:dual}. Moreover, $l_2$ is the distinguished section of the core bundle
$\vectdual{D_2}\to M$ of $\dblvecdual{D}{D_1}$ which corresponds to $1_{A_2}\in\Sec_M(A_2^{\dagger})$.
\end{theo}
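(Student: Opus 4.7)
The plan is to work fiber by fiber over $A_1$, using the universal property of the vector hull to identify the fibers of $\dblvecdual{D}{D_1}$ with fiberwise affine duals, and then to check that the two conditions $\tilde l_1 = 1$, $\tilde l_3 = 1$ cut out precisely the special affine maps in the sense of Theorem~\ref{thm:dual}. That the locus $\{\tilde l_1 = 1,\,\tilde l_3 = 1\}$ is itself a double affine subbundle of the double vector bundle $\dblvecdual{D}{D_1}$ follows directly from Theorem~\ref{thm:constr} applied to the two nowhere vanishing linear functions $l_1$ on $D_1$ and $l_3$ on $\vectdual{D_3}$.

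Fix $a_1 \in A_1$ and write $V_{a_1} = \zp_1^{-1}(a_1) \subset D$. The fiber of $A \to A_1$ over $a_1$ is the affine hyperplane $\zp_1^{-1}(a_1) \cap A = \{x \in V_{a_1} : \tilde l_2(x) = 1\}$ inside the vector space $V_{a_1}$. By the fiberwise analog of Proposition~\ref{prop: isohulldvb}, $V_{a_1}$ is canonically the vector hull of this affine hyperplane, so affine maps $\zp_1^{-1}(a_1) \cap A \to \R$ correspond bijectively to linear functionals on $V_{a_1}$, i.e.\ to elements of the fiber of $\dblvecdual{D}{D_1}$ over $a_1$. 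Under this correspondence the condition $\tilde l_1(\tilde\phi) = 1$ is a pullback from the base and simply says that the foot lies in $A_1$, while the condition $\tilde l_3(\tilde\phi) = 1$ is the pullback of $l_3 = \zs$ along the second projection $\dblvecdual{D}{D_1} \to \vectdual{D_3}$, which sends $\tilde\phi$ to its restriction to the core fiber $D_3|_m \hookrightarrow V_{a_1}$ (embedded as $\ker \zp_2|_{V_{a_1}}$). Hence $\tilde l_3(\tilde\phi) = \tilde\phi(\zs(m))$.

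Tracing through Proposition~\ref{prop:inducedSections}, the vector $\zs(m) \in D_3|_m \subset V_{a_1}$ coincides with the image of the distinguished vector $\zs_1(a_1) \in \Vmodel_1(A)|_{a_1}$ under the inclusion $\Vmodel_1(A)|_{a_1} \hookrightarrow V_{a_1}$. So $\tilde l_3(\tilde\phi) = 1$ is the condition that the linear part of the corresponding affine map sends $\zs_1(a_1)$ to $1$, i.e.\ that the map is a special affine map; together with $\tilde l_1 = 1$ this cuts out $\dblaffdual{A}{A_1}$ fiberwise. Because the identification is induced by the canonical pairing of $D$ with $\dblvecdual{D}{D_1}$, it intertwines both ambient double vector bundle structures, and hence restricts to an isomorphism of special double affine bundles. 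For the core assertion: under the canonical identification $A_2^\dag \simeq \vectdual{D_2}$ obtained by extending an affine map on $A_2 = \subaff{D_2}{l_2}$ uniquely to a linear map on its vector hull $D_2$ (Proposition~\ref{prop: isohulldvb}), the constant section $1_{A_2}$ extends to the unique linear function on $D_2$ equal to $1$ on $A_2$, which is by definition $l_2$. The main obstacle is the careful identification $\zs(m) \leftrightarrow \zs_1(a_1)$ inside $V_{a_1}$: this hinges on unwinding $\tilde\zs_1 = \vectdual{(\zp_2')}(\zs)$ from Proposition~\ref{prop:inducedSections} and checking that the pullback embedding of the core agrees with $\ker \zp_2|_{V_{a_1}}$ under the vector hull identification.
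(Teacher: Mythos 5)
Your proposal is correct and takes essentially the same route as the paper: both recognize $\dblaffdual{A}{A_1}$ as the locus $\{\tilde l_1=\tilde l_3=1\}$ in $\dblvecdual{D}{D_1}$ by unwinding $\tilde l_3(\zf)=\langle \zf,\tilde\zs_1(x)\rangle$ through the core embedding of Proposition \ref{prop:inducedSections}, and both obtain the core statement from $l_{2|A_2}=1_{A_2}$. Your explicit appeal to Theorem \ref{thm:constr} for the subbundle structure and the fiberwise vector-hull identification of $A_x^{\dagger}$ with the fibers of $\dblvecdual{D}{D_1}$ only make explicit what the paper leaves implicit.
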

\bepf (of Theorems \ref{thm:dual} and \ref{thm:dual2})
We can assume that $\Abf$ is given in $\Dbf$ by means of linear functions $l_1$, $l_2$. We shall recognize
$\dblaffdual{A}{A_1}$ as the subset
\be{e:subsetA}
\dblaffdual{A}{A_1} = \{\zf\in \dblvecdual{D}{D_1}: \tilde{l}_1(\zf) = \tilde{l}_3(\zf) =1 \}
\ee
where $l_3$ is the linear function on $\vectdual{D_3}$ associated with the section $\zs\in\Sec_M(D_3)$ and
$\tilde{l}_3$ is the pullback of $l_3$ with respect to the bundle projection $\dblvecdual{D}{D_1}\to
\vectdual{D_3}$. We have $l_3(\zf_m) = \langle \zf_m, \zs(m)\rangle$ for $m\in M$ and $\zf_m\in
{(\vectdual{D_3})}_m$, hence
$$
D_3^{\ddagger} = \{\zf_m\in \vectdual{D_3} :\langle \zf_m, \zs(m)\rangle =1, m\in M \} =
\subaff{(\vectdual{D_3})}{l_3}.
$$
Let $\tilde{\zs}_1 \in\Sec_{D_1}(D)$, $\zs_1 = \tilde{\zs}_{1|A_1}$, be the distinguished  elements induced
from the section $\zs$ of the core bundle, as in the proof of Proposition \ref{prop:inducedSections}. For
$\zf\in \dblvecdual{D}{D_1}$, which lies in a fiber over $x\in D_1$, we have
$$
\langle \tilde{l_3}, \zf \rangle  = \langle \zf, \tilde{\zs}_1(x)\rangle,
$$
hence  $\zf\in \dblaffdual{A}{A_1} \subset \dblvecdual{D}{D_1}$ if and only if $ \langle \tilde{l_3}, \zf
\rangle = \langle \zf, \zs_1(x)\rangle =1 $ for $x\in A_1\subset D_1$, what proves (\ref{e:subsetA}). This
way, in the vertical dual bundle $\Dbf^V = (\dblvecdual{D}{D_1}; D_1, \vectdual{D_3}; M)$ we have
distinguished the affine subbundles
$$
\xymatrix { **[l] \dblaffdual{A}{A_1}\subset \dblvecdual{D}{D_1}\ar[r] \ar[d]^{} & **[r] {\vectdual{D_3}
\supset D_3^{\ddagger}}\ar[d]^{} \\
**[l] {A_1\subset D_1} \ar[r]^{} & M }
$$
Moreover, the core of $\dblaffdual{A}{A_1}$ is  the core of $\Dbf^V$ and is equal to $\vectdual{D_2}\simeq
\Aff(A_2, \Real) =  A_2^{\dagger}$. It is equipped with the section $l_2 \in \Sec_M(\vectdual{D_2})$, which
corresponds to $1_{A_2}$, as $l_{2|A_2} = 1_{A_2}$.

\epf

\noindent Recall that, if $(A, v_A)$ is a special affine bundle, then its {\it adjoint} is the special affine
bundle $(A, -v_A)$. If $(\Abf, \zs)$ is a special double affine bundle,where $\zs$ is the distinguished
section of the core bundle,  then its adjoint is $(\Abf, -\zs)$. We shall denote with $\Dbf^f  = (D; D_2, D_1;
M)$ and $\Abf^f = (A; A_2, A_1; M)$ the flips of $\Dbf$ and $\Abf$, respectively. We consider the flip as an
operation on double vector (or affine) bundles relying on switching  the horizontal and vertical arrows in the
diagrams representing them. If $\Abf$ is special then so is $\Abf^f$ with the same distinguished section of
the core bundle. Let $\zp_D:D\to D_1\times_M D_2$ denotes the canonical fibration.

\begin{prop}\label{prop:affspduality} With the assumptions of Theorem \ref{thm:dual}, the special affine bundles
$\dblaffdual{\Abf}{A_1}\to \ahyp{D}_3$ and the adjoint of $\dblaffdual{\Abf}{A_2}\to \ahyp{D}_3$ are in
duality. The special double affine bundle $\Abf^{HVH}$ is naturally isomorphic to the adjoint of $\Abf^f$.
\end{prop}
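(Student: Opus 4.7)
My plan is to use the embedding of $\Abf$ into its hull $\Dbf$ provided by Theorems \ref{thm:dual} and \ref{thm:dual2}, and then reduce both assertions to well-known duality statements for double vector bundles, carefully tracking the distinguished sections of the core bundles to recover the special (in particular adjoint) structure.

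For the first claim, I would begin by noting that by Proposition \ref{prop:inducedSections} applied to $\dblaffdual{\Abf}{A_1}$ and $\dblaffdual{\Abf}{A_2}$, both are genuine special affine bundles over $\ahyp{D}_3$, the distinguished sections being the respective cores identified via Theorem \ref{thm:dual}. Using the presentation from Theorem \ref{thm:dual2}, they embed as the subsets
\[
\dblaffdual{A}{A_1}=\{\zf\in\dblvecdual{D}{D_1}: \tilde l_1(\zf)=\tilde l_3(\zf)=1\},\qquad
\dblaffdual{A}{A_2}=\{\zj\in\dblvecdual{D}{D_2}: \tilde l_2(\zj)=\tilde l_3(\zj)=1\}.
\]
Now there is a canonical nondegenerate fiberwise pairing $\dblvecdual{D}{D_1}\times_{\vectdual{D_3}}\dblvecdual{D}{D_2}\to \Real$ that puts these double vector bundle duals in duality over $\vectdual{D_3}$ (see \cite{Mac,KU}). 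I would restrict this pairing over $\ahyp{D}_3\subset \vectdual{D_3}$, and check, either in adapted local coordinates from Theorem \ref{eqas} or directly, that on pairs $(\zf,\zj)$ with $\tilde l_1(\zf)=\tilde l_3(\zf)=\tilde l_2(\zj)=\tilde l_3(\zj)=1$ the value $\langle \zf,\zj\rangle$ is affine in each argument with linear part vanishing on the respective distinguished core sections. A short computation, which I expect to produce a constant shift equal to $-1$ (because both pairings against the distinguished core section give $1$, but one enters as source and one as target of the pairing), yields the adjoint character of the duality.

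For the second claim I would iterate the operations. Starting from $\Abf=(A;A_1,A_2;M)$ with core $(D_3,\zs)$, the horizontal dual has bases $\ahyp{D}_3$ and $A_2$ and core $(A_1^\dag, 1_{A_1})$. A direct inspection of the bases and cores of the iterated dual shows that $\Abf^{HV}$ has the same underlying data as $(\Abf^V)^f$, and hence $\Abf^{HVH}$ has the same data as $(\Abf^{VV})^f$. For double vector bundles one has the canonical isomorphism $\Dbf^{VV}\simeq \Dbf$, but the induced identification of the core twists the distinguished element by a sign (this is precisely the classical sign appearing in the biduality of vector bundles when one keeps track of a canonical pairing $\vectdual{\vectdual V}\simeq V$ against a distinguished covector). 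Transporting this sign through the chain of identifications establishes $\Abf^{HVH}\simeq (\Abf^f)^{\rm adj}$.

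I expect the book-keeping of signs and of the flip operation to be the most delicate part: one must verify that the $-1$ produced by $\Dbf^{VV}\simeq \Dbf$ lands on the core distinguished section (giving the adjoint) rather than on a different factor. Using adapted coordinates as in Theorem \ref{eqas} and the coordinate formulas \eqref{e:dblvhulltrans} for the hull should make this sign tracking transparent, since the relevant transition functions are multilinear and one can explicitly read off the action of each of $H$, $V$, and ${}^f$ on the coefficient data. The remaining ingredients, i.e.\ naturality and functoriality, follow from the universal properties of the constructions in Sections~2 and~3.
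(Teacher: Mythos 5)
Your treatment of the first assertion coincides with the paper's: the authors use exactly the pairing $\langle\Phi,\Psi\rangle=\Phi(x)-\Psi(x)$ between $\dblvecdual{D}{D_1}$ and $\dblvecdual{D}{D_2}$ over $\vectdual{D_3}$ from \cite{KU}, restrict it to the affine subbundles cut out by $\tilde l_1=\tilde l_3=1$ and $\tilde l_2=\tilde l_3=1$, and read off the adjoint from $\langle\Phi+\bar l_2(\zf),\Psi\rangle=\langle\Phi,\Psi\rangle+1=\langle\Phi,\Psi-\bar l_1(\zf)\rangle$. (One small slip in your formulation: the linear parts of the restricted pairing do not vanish on the distinguished core sections; they take the values $+1$ and $-1$ there, and this asymmetry is precisely what forces the adjoint on one side.)

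For the second assertion there is a genuine error in where you locate the sign. The isomorphism $\Dbf^{VV}\simeq\Dbf$ is ordinary biduality of the vector bundle $D\to D_1$ over the fixed base $D_1$; it is the identity on all the constituent data, including the core --- there is no ``classical sign in the biduality of vector bundles''. If you run your chain $\Abf^{HVH}\simeq((\Abf^{V})^{f})^{H}=(\Abf^{VV})^{f}\simeq\Abf^{f}$ and place the sign only in the last step, you in fact get no sign at all and conclude $\Abf^{HVH}\simeq\Abf^{f}$ without the adjoint, which is false. The sign actually lives in the first link $\Dbf^{HV}\simeq(\Dbf^{V})^{f}$, which is induced by the antisymmetric pairing $\Phi(x)-\Psi(x)$ above; equivalently, it is the fact, cited by the paper from \cite{KU,Mac}, that the composite $\za:\Dbf^{HVH}\to\Dbf^{f}$ is the identity on the side bundles $D_2$, $D_1$ but minus the identity on the core $D_3$. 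The paper's own argument avoids your factorization altogether: it encodes $\Abf$ as the triple $(\Dbf;l_1,l_2;l_3)$, tracks this triple through $H$, $V$, $H$ to obtain $(\Dbf^{HVH};l_2,l_1;l_3)$, and then restricts $\za$; since $\za$ sends the core section $\zs$ (corresponding to $l_3$) to $-\zs$, the adjoint of $\Abf^{f}$ appears. If you wish to keep your chain, you must establish the sign in $\Dbf^{HV}\simeq(\Dbf^{V})^{f}$ directly (for instance in the adapted coordinates you propose), rather than importing it from biduality.
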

\bepf
Let $\Dbf$ be as usual, the hull of $\Abf$. For $\Phi \in \dblvecdual{D}{D_1}$ and $\Psi \in
\dblvecdual{D}{D_2}$ such that $\zp_{D^V}(\Phi) = (d_1, \zf) \in \fbr{D_1}{\vectdual{D_3}}{M}$ and
$\zp_{D^H}(\Psi) = (\zf, d_2) \in\fbr{\vectdual{D_3}}{D_2}{M}$ the difference
$$
\langle \Phi, \Psi \rangle = \Phi(x) - \Psi(x),
$$
where $x\in D$ is any element lying over $(d_1, d_2)\in\fbr{D_1}{D_2}{M}$, does not depend on the choice of
$x$ and defines a non-degenerate pairing between $\dblvecdual{D}{D_1}$ and $\dblvecdual{D}{D_2}$ (\cite{KU}).
We shall restrict the pairing $\langle\cdot, \cdot\rangle$ to the affine subbundles $\dblaffdual{A}{A_1}$ and
$\dblaffdual{A}{A_2}$. The sections $l_2$, $l_1$ of the core bundles $\vectdual{D_2}$, $\vectdual{D_1}$ of the
double vector bundles $\dblvecdual{\Dbf}{D_1}$, $\dblvecdual{\Dbf}{D_2}$, respectively, induce the sections
$\bar{l}_2\in \Sec_{\vectdual{D_3}}(\dblvecdual{D}{D_1})$, $\bar{l}_1\in
\Sec_{\vectdual{D_3}}(\dblvecdual{D}{D_2})$. After the restriction to $\ahyp{D_3}\subset \vectdual{D_3}$ we
get the distinguished  sections of the model bundles of $\dblaffdual{A}{A_j} \to \ahyp{D_3}$, $j=1, 2$, which
we still denote by $\bar{l}_2$, $\bar{l}_1$. The sum  $\Phi + \bar{l}_2 (\zf)$ of two elements in the same
fiber of $\dblvecdual{D}{D_1}\to\vectdual{D_3}$, evaluated on $x\in D$, gives $\Phi(x) + l_2(d_2)$. Since
$\bar{l}_{j|A_j} = 1_{A_j}$, and $d_j\in A_j$, for $j=1, 2$, we have
$$
\langle\Phi + \bar{l}_2(\zf), \Psi\rangle = \langle \Phi, \Psi\rangle +1 = \langle \Phi, \Psi -
\bar{l}_1(\zf)\rangle,
$$
what justifies the first statement of our proposition.

If we encode the special double affine bundle $A$ as in the theorem in the form $(\Dbf; l_1, l_2; l_3)$ (the
special double affine subbundle of $\Dbf$ determined by the linear functions $l_1$, $l_2$ on the bases of the
side bundles and a linear function $l_3$ on the dual to the core bundle), then the horizontal dual $\Abf^H =
\dblaffdual{\Abf}{A_2}$ goes with $(\Dbf^H; l_3, l_2; l_1)$. Consequently, $\Abf^{HV}$ corresponds to
$(\Dbf^{HV}; l_3, l_1; l_2)$ and finally $\Abf^{HVH}$ to $(\Dbf^{HVH}; l_2, l_1; l_3)$. Moreover, it is known
that the double vector bundle $\Dbf^{HVH}$ obtained from $\Dbf$ by first taking horizontal dual, then
vertical, and again horizontal dual, is naturally isomorphic to the flip $\Dbf^f$ of $\Dbf$ (\cite{KU,Mac}).
The isomorphism $\za: \Dbf^{HVH}\to\Dbf^f$ is the identity on the side bundles $D_2$ and $D_1$, but is minus
the identity on the core $D_3$. Let us restrict $\za$ to the double affine subbundle $\Abf^{HVH}$. The
distinguished section $\zs$ (corresponding to $l_3$ above)  of the core bundle is moved to $-\zs$, hence $\za$
induces an isomorphism between $\Abf^{HVH}$ and the adjoint of $\Abf^f$.

\epf

\section{The affine phase bundle}

Let $\zz:Z\to M$ be a bundle of affine values (AV-bundle, in short). By definition (\cite{GGU2}), it is a
special  affine bundle of rank $1$. It follows that the model vector bundle of $\zz$ is the trivial vector
bundle $M\times \Real \to M$. The action of the model bundle on the total space $Z$ allows us to consider
$\zz$ as $(\Real, +)$-principal bundle with the base $M$. The {\it affine phase bundle} (\cite{U,GGU2})
$\Phase\zz : \Phase Z\to M$ is defined as follows. Let us consider the following equivalence relation on the
set of pairs $(m, \zs)$, $m\in M$, $\zs\in\Sec(\zz)$:
$$
(m, \zs)\sim (m', \zs') \quad\text{if and  only if}\quad m=m'\quad\text{and}\quad d_m(\zs-\zs') = 0.
$$
Here we identify $\zs-\zs'$ with a section of the model bundle $M\times\Real\to M$, i.e. a function on $M$.
The equivalence classes of $\sim$ are the elements of $\Phase Z$. It is an affine bundle modelled on $\ctgT
M$. The class of $(m, \zs)$ is denoted by $\affdiff_m\zs$ and called an {\it affine differential} of $\zs$ at
$m$.

Let $\zh:A\to M$ now be a special affine bundle with the distinguished nowhere vanishing section
$v_A\in\Sec_M(\Vmodel(A))$ and let $\zz:A\to\und{A}$, $\und{A}=A/\langle v_A\rangle$, be the associated
AV-bundle. We shall use the convention of (\cite{GGU2}), that the distinguished section of $\zz$ is $-v_A(m)$
at points $\und{a}\in\und{A}$ lying over $m\in M$. The total space of the affine phase bundle  $\Phase\zz:
\Phase A\to \und{A}$, has also another structure of an affine bundle, over the base $\und{\affdual{A}} =
\affdual{A}/\langle 1_A \rangle $, what makes $\Phase A$ a canonical example of a double affine bundle
depicted in the diagram
\begin{equation}\label{e:PA}
\xymatrix{ \Phase A\ar[r]^{\affdual{\Phase}\zeta} \ar[d]^{\Phase\zeta} &
\underline{\affdual{A}}\ar[d]^{\underline{\affdual{\eta}}}  \\
\underline{A}\ar[r]^{\underline{\eta}} & M }.
\end{equation}
In this section, basing on the example of  $\Phase A$, we are going to describe canonical objects associated
with double affine bundles. We shall give a clear picture  of the  hull, the  model double vector bundles  and
the duals of $\Phase A$.

We are going first to describe the double affine structure on $\Phase A$. Let us start with the projection in
the affine bundle $\affdual{\Phase}\zz:\Phase A\to\und{\affdual{A}}$. Let $\zw_{\und{a}}$ be any element (an
affine covector) of $\Phase A$, $a\in A$, $\zh(a) = m\in M$, $\und{a} = \zz(a)$, $(\Phase\zz)(\zw_{\und{a}}) =
\und{a}$. Let us write it in a form  $\zw_{\und{a}} = \affdiff_{\und{a}}\zs$, for a section
$\zs\in\Sec_{\und{A}}(A)$. It is possible to find an {\it affine} section representing $\zw_{\und{a}}$, i.e.
we may assume that $\zs$ is an affine morphism from $\und{\zh}:\und{A} \to M$ to $\zh:A\to M$. The space of
affine sections of $\zz:A\to\und{A}$ is in $1$-$1$ correspondence $\zs\mapsto f_{\zs}$ with the set of special
affine maps on $A$, and hence with $\Sec_M(\affdual{A})$ (\cite{U}), where  $f_\zs : A\to\Real$ is defined by
\be{e:ProjOnDualA}
f_\zs(x) \cdot v_A(m') = [\zs(\und{x}), x] \in \Vmodel(A_{m'}),
\ee
for $x\in A$, $m'=\zh(x)$. We have $f_\zs(x + v_A(m'))= f_\zs(x)+1$, so $f_\zs$ is a special affine map.
Moreover,  if $\zw_{\und{a}} = \affdiff_{\und{a}}\zs'$ for an affine section $\zs'$, then
$d_{\und{a}}(\zs-\zs') = 0$, hence $f_\zs - f_{\zs'}$ is constant on $A_m$. Thus we get a well-defined map
$\affdual{\Phase}\zz: \Phase A\to\und{\affdual{A}}$, $\zw_{\und{a}}\mapsto f_{\zs|A_m} + \Real \cdot 1_{A_m}
\in \und{\affdual{A_m}}$. We are going now to define the affine structure in a fiber of $\affdual{\Phase}\zz$.
Let us fix $m\in M$, $\und{a_1}$, $\und{a_2} \in \und{A_m}$, $f_0\in\affdual{A_m}$ and consider the fiber $F =
(\affdual{\Phase}\zz)^{-1}(\und{f_0})$ and the set $\sum$ of all affine sections $\zs \in \Sec_{\und{A}}(A)$
such that $\affdiff_{\und{a}}\zs \in F$ at some point $\und{a}\in \und{A_m}$. The set $\sum$ does not depend
on the choice of $\und{a}$ from $\und{A_m}$. Let us choose a section $\zs_0\in\sum$. Then any element $\zw \in
F$ is of the form
$$
\zw = \affdiff_{\und{a}}\zs_0 + \za
$$
for an $\und{a}\in \und{A_m}$ and a unique $\za\in \ctgT _m M$, where $\ctgT _m M$ is considered as a subspace
of the model space $\ctgT_{\und{a}} \und{A} \simeq \Vmodel(\Phase_{\und{a}}A)$ via the pullback map with
respect to the projection $\und{A}\to M$. Indeed, the difference of two affine forms from $F$, $\zw -
\affdiff_{\und{a}}\zs_0 \in \ctgT_{\und{a}} \und{A}$, is  zero on any vector tangent to the fiber
$\und{\zh}^{-1}(m)$ and hence it is the pullback of a 1-form on $M$. Let us define the affine combinations map
in $F$ by the formula
\be{e:affstrPA}
\aff(\affdiff_{\und{a}_1}\zs_0 + \za_1, \affdiff_{\und{a}_2}\zs_0 + \za_2; \zl) = \affdiff_{\und{a}}\zs_0 +
(\zl (\za_1 + (1-\zl) \za_2),
\ee
where $\und{a}=\aff(\und{a}_1, \und{a}_2; \zl)$ and $\za_i \in \ctgT_mM$, $i=1, 2$. It follows easily from the
above discussion that this definition does not depend on the choice of $\zs_0$. Indeed, if $\zs, \zs' \in\sum$
are such that $\affdiff_{\und{a_i}}\zs = \affdiff_{\und{a_i}}\zs'$ for $i=1, 2$, then $\affdiff_{\und{a}}\zs =
\affdiff_{\und{a}}\zs'$ for any affine combination $\und{a} = \aff(\und{a_1}, \und{a_2}; \zl)$, because the
function $\zs-\zs' : \und{A}\to \Real$ is affine.

A vector space $V$ with two distinguished non-zero elements $v_0\in V$ and $\za_0\in \vectdual{V}$ such that
$\za_0(v_0)=0$ is called a {\it bispecial} vector space (\cite{GGU2}).  If $(A, v_A)$ is a special affine
space then its vector hull $\vhull{A}$ is canonically a bispecial vector space with distinguished elements
$v_A\in \Vmodel(A)\subset\vhull{A}$ and the unique function $\za_A\in \vectdual{(\vhull{A})}$ for which $A$ in
$\vhull{A}$ is defined by the equation $\za_A = 1$. Now we assume that $(\zh, v_A)$, $\zh: A\to M$, is a
special affine bundle. Then $\vhull{A}$ is a bispecial vector bundle over $M$. We shall use the letter $E$ for
$\vhull{A}$. Let us analyze the cotangent bundle $\ctgT E$. The action $\zvf$ of $(\Real, +)$, $\zvf_t :
v_m\mapsto v_m + t\cdot v_A(m)$ on $E$, can be lifted to an $(\Real, +)$-action, say $\zc_1$, on $\ctgT E$ by
means of pullback
\be{e:action1}
(\zc_1)_t := (\zvf_{-t})^*.
\ee
We have also another $(\Real, +)$-action on $\ctgT E$ given by
\be{e:action}
\zc_2(t, \zw_v) = \zw_v + t\cdot d_v\za_A
\ee
for $v\in\vhull{A}$, $\zw_v\in \ctgT_v E$, $t\in \Real$. Obviously the actions $\zc_1$ and $\zc_2$ commute and
give rise to an action $\zc = (\zc_1, \zc_2)$ of $\Real\times \Real$ on $\ctgT E$. We shall show that the
orbits of $\zc$ form the vector hull of $\Phase A$.

\begin{theo}\label{thm:hullPA} Let $\zh:(A, v_A) \to M$ be a special affine bundle and let $\zc$ be the canonical action
of $\Real\times \Real$ on ${\ctgT} E$ defined above. The orbit space of $\zc$, denoted by $\affctg{A} = \ctgT
E/\Real\times\Real$, has a well-defined structure of a double vector bundle
\be{e:SdA}
\xymatrix{\affctg{A} \ar[r] \ar[d] & \und{\vectdual{E}}\ar[d] \\
\und{E}\ar[r] & M }
\ee
inherited from $(\ctgT E; E, \vectdual{E}; M)$. It is canonically isomorphic to the double vector hull of the
double affine phase  bundle $\Phase A$ depicted in (\ref{e:PA}).
\end{theo}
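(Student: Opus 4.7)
Proof plan. My plan is to recognise $\affctg{A}$ as a double vector bundle in its own right, to embed the intrinsically defined $\Phase A$ into it as a double affine subbundle cut out by two nowhere-zero linear functions at level one, and then to invoke Proposition~\ref{prop: isohulldvb} to conclude.

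I would first check that the $\R\times\R$-action $\zeta=(\zeta_1,\zeta_2)$ is free, proper and commutative. Both flows are Hamiltonian on $\ctgT E$, with Hamiltonians $\omega_v\mapsto\omega_v(v_A)$ and $\omega_v\mapsto\alpha_A(v)$ respectively; their Poisson bracket is $v_A(\alpha_A)\circ\pi_E$, which vanishes because $\alpha_A(v_A)=0$ by the bispecial structure of $E=\vhull{A}$. Freeness follows from $v_A$ and $d\alpha_A$ being nowhere zero, and properness is standard for lifted translations. Since $\zeta_1$ covers translation by $v_A$ on $E$ and acts trivially on $\vectdual{E}$, while $\zeta_2$ is trivial on $E$ and covers translation by $\alpha_A$ on $\vectdual{E}$, the quotients on the sides yield $\und{E}$ and $\und{\vectdual{E}}$. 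The two vector bundle additions of $(\ctgT E;E,\vectdual{E};M)$ are preserved by these commuting translations and the core $\ctgT M$ sits at the double zero and is untouched, so $\affctg{A}$ inherits the double vector bundle structure (\ref{e:SdA}) with core $\ctgT M$.

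Next I identify the sides with the vector hulls that appear in $\dblvhull{\Phase A}$. Because $\alpha_A(v_A)=0$, the function $\alpha_A$ descends to a nowhere-zero linear form on $\und{E}$ whose level set $\{\alpha_A=1\}$ is $\und{A}$, so the universal property of the vector hull gives $\und{E}\simeq\vhull{\und{A}}$; dually, evaluation at $v_A$ descends to a nowhere-zero linear form on $\und{\vectdual{E}}$ with level set $\und{\affdual{A}}$, producing $\und{\vectdual{E}}\simeq\vhull{\und{\affdual{A}}}$. The cores $\ctgT M$ of $\affctg{A}$ and of $\Phase A$ coincide.

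The main obstacle is the third step: realising $\Phase A$ inside $\affctg{A}$ as a double affine subbundle cut out by these two linear forms at level one. I would define $\Psi\colon\Phase A\to\affctg{A}$ on a representative $\affdiff_{\und{a}}\sigma$, with $\sigma$ an affine section of $\zeta$, by $\Psi(\affdiff_{\und{a}}\sigma)=[d_{\sigma(\und{a})}\hat f_\sigma]$, where $f_\sigma$ is the special affine map given by (\ref{e:ProjOnDualA}) and $\hat f_\sigma\colon E\to\R$ is its unique fibrewise linear extension. The condition $\sigma(\und{a})\in A$ puts the $\und{E}$-projection on the level $\alpha_A=1$, and the special identity $\hat f_\sigma(v_A)=f_\sigma^v(v_A)=1$ puts the $\und{\vectdual{E}}$-projection on $\und{\affdual{A}}$. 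The delicate point is independence of the representative: if $d_{\und{a}}(\sigma-\sigma')=0$ then $\sigma'(\und{a})=\sigma(\und{a})+c\,v_A$ for some $c\in\R$, and the difference $\hat f_\sigma-\hat f_{\sigma'}$ restricts on each fibre of $E\to M$ to a scalar multiple of $\alpha_A$ whose variation tangent to $\und{A}$ vanishes at $\und{a}$, so the two differentials at $\sigma(\und{a})$ and $\sigma'(\und{a})$ are related by a $\zeta_1$-translation combined with a $\zeta_2$-shift. A coordinate or dimension count then promotes $\Psi$ to a diffeomorphism of $\Phase A$ onto the prescribed level set inside $\affctg{A}$. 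With $\Phase A$ so realised as a double affine subbundle at levels $c_1=c_2=1\neq 0$, Proposition~\ref{prop: isohulldvb} yields the canonical isomorphism $\dblvhull{\Phase A}\simeq\affctg{A}$ asserted by the theorem.
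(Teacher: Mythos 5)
Your proposal is correct and follows the same overall architecture as the paper's proof: descend the double vector bundle structure of $(\ctgT E;E,\vectdual{E};M)$ to the orbit space, identify the side bundles with $\und{E}$ and $\und{\vectdual{E}}$, realize $\Phase A$ inside $\affctg{A}$ as the double affine subbundle cut out by the two descended linear functions at level $1$, and finish with Proposition \ref{prop: isohulldvb}. The two middle steps are where you genuinely diverge. For the descent of the structure, the paper writes the action in adapted coordinates (translations in $y^0$ and $\zp_{n+1}$) and observes that the two commuting homotheties $h^1_t,h^2_t$ map $\zc$-orbits to $\zc$-orbits, invoking the homogeneity-structure characterization of double vector bundles from \cite{GR}; you instead argue symplectically via commuting Hamiltonian flows and descent of the two additions. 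Your route is coordinate-free and the vanishing bracket $\za_A(v_A)=0$ is a nice conceptual explanation of commutativity, but note that the additions are not literally preserved (e.g.\ $(\zw_v+t\,d\za_A)+_E(\zw'_v+t\,d\za_A)$ picks up $2t\,d\za_A$), only carried to the correct orbit, and the summands over $\und{E}$ live at different points of $E$ — this is exactly the bookkeeping that the homothety argument avoids. For the embedding of $\Phase A$, the paper factors through the reduced cotangent bundle, $\Phase A\subset\redctgT A\simeq(\ctgT E)_{|A}/\Real\times\Real\hookrightarrow\affctg{A}$, citing the known realization $\Phase A=\{[\zw_v]\in\redctgT A:\langle\zw_v,(X_A)_v\rangle=1\}$ from \cite{GGU2}; your direct map $\Psi(\affdiff_{\und{a}}\zs)=[d_{\zs(\und{a})}\hat f_\zs]$ is self-contained and makes the role of the special affine maps $f_\zs$ of (\ref{e:ProjOnDualA}) explicit, at the price of the well-definedness check you flag. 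One point you should not leave to a ``dimension count'': you need $\Psi$ to be an isomorphism of double affine bundles, i.e.\ to intertwine the intrinsic affine combinations (\ref{e:affstrPA}) on $\Phase A$ with the structures induced from $\tilde h^1,\tilde h^2$, not merely a diffeomorphism onto the level set $\{\tilde l_1=\tilde l_2=1\}$; the paper states this compatibility explicitly (if tersely) before applying Proposition \ref{prop: isohulldvb}.
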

\bepf
By definition, $\und{E} = E/\langle v_A\rangle$ and $\und{\vectdual{E}} = \vectdual{E}/\langle \za_A\rangle$.
Let us remark the obvious isomorphisms: $\vhull{(\und{A})} \simeq \und{E}$ and $\vhull{\affdual{A}}\simeq
\Aff(A, \Real) \simeq \vectdual{E}$. Note however, that there is no canonical isomorphism between the dual of
$\und{E}$ and $\und{\vectdual{E}}$ and so the double vector bundle in the theorem  is not of the type $(\ctgT
F; F, \vectdual{F}; M)$ for a vector bundle $F\to M$. Indeed, the dual of $\und{\vectdual{E}}$ is naturally
isomorphic to the model vector bundle $\Vmodel(A)$.

Let us denote by $(x^a)$, $1\leq a\leq m$, the local coordinates on $M$, by $(y^i)$, $0\leq i\leq n+1$, the
coordinates in the fibers of $E\to M$, so that $A\subset E$ is described by the equation $y^{n+1} = 1$ and
$y^i(v_A) = \zd_{0}^i$. Denote the conjugate momenta in $\ctgT E$ by $p_a$ and $\zp_i$. The action $\zc_{st}$
of $(s, t)\in \Real\times \Real$ reads as
$$
\zc_{st}^*(y^0) =y^0+s, \quad \zc_{st}^*(\zp_{n+1}) = \zp_{n+1} +t
$$
and $\zc_{st}^*(y^i) = y^i$, $\zc_{st}^*(\zp_j) = \zp_j$ for $i\neq 0$ and $j\neq{n+1}$. The structure of a
double vector bundle on $\ctgT E$ is given by the two commuting homoteties $h_t^i:\ctgT E\to \ctgT E$,
$$
h_t^1(x^a, y^i, p_a, \zp_i) = (x^a, y^i, t\cdot p_a, t\cdot \zp_i), \quad h_t^2(x^a, y^i, p_a, \zp_i) = (x^a,
t\cdot y^i, t\cdot p_a, \zp_i), \quad t\in \Real.
$$
Note that if $\zw$ and $\zw'$ are in the same orbit of $\zc$ then the same holds for $h_t^i(\zw)$ and
$h_t^i(\zw')$, $i=1,2$. Hence we get  well-defined commuting homogeneous structures $\tilde{h}^1$,
$\tilde{h}^2$ on the quotient space $\ctgT E/\Real\times\Real$. This proves the first assertion of the
theorem.

Let us compute images of the projections $\tilde{h}^1_0$ and  $\tilde{h}^2_0$. The image of $\tilde{h}^1_0$
consists of null $1$-forms on $E$ modulo the translations in the direction of $v_A$, hence
$$
\tilde{h}^1_0 (\affctg{A}) \simeq E/\langle v_A\rangle = \und{E}.
$$
The image of $\tilde{h}^2_0$ consists of orbits of $\zw_v \in \ctgT_v E$ with $v\in M\subset E$ such that
${\zw_v}_{|\tgT_vM}$ is zero, hence the image is clearly isomorphic with $\vectdual{E}/\langle \za_A\rangle
\simeq \und{\vectdual{E}}$. Here we identified $M$ with  the image of the zero section of $E\to M$.

Let us define the linear functions $l_1: \und{E}\to \Real$ and $l_2:\und{\vectdual{E}}\to \Real$  by
\be{e:lifts}
l_1(v + \Real\cdot v_A(m)) := \za_A(v), \quad l_2(f+\Real\cdot \za_A):= f(v_A(m)),
\ee
for $v\in E_m$, $f\in\Aff(A_m, \Real)\simeq \vectdual{E_m}$. The functions $l_1$, $l_2$ are well defined
because $\za_A(v_A) = 0$. The pullbacks of $l_i$, $i=1,2$, $\tilde{l}_i: \affctg{A} \to \Real$ are
\be{e:lifts2}
\tilde{l}_1([\zw_v]) = \za_A(v), \quad \tilde{l}_2([\zw_v]) = \langle \zw_v, (X_A)_v \rangle,
\ee
where $[\zw_v]$ is the class of $\zw_v\in \ctgT_v E$ in $\affctg{A}$ and the  vector field $X_A\in\Vect(E)$ is
the vertical lift of $v_A\in\Sec(E)$. Note that the subset of $E$ given by the equation $l_1=1$ is $\und{A}$
and the subset of $\und{\vectdual{E}}$ given by $l_2=1$ is $\und{\affdual{A}}$.

Let us recall that the vector hull of $\zz:\Phase A\to\und{A}$ is isomorphic with the {\it reduced} cotangent
bundle $\redctgT A$ (\cite{GGU,GGU1,GGU2}), the orbit space of the $(\Real, +)$-action on $\ctgT A$ induced by
translations along $v_A$. Although there is no canonical embedding of $\ctgT A$ into $\ctgT E$, we shall show
that we do have such an embedding of $\redctgT A$ into the factor space $\affctg{A}$. Indeed, $\ctgT A \simeq
(\ctgT E)_{|A}/\Real\cdot d\za_A$, hence
$$
\redctgT A \simeq (\ctgT E)_{|A}/\Real\times\Real\hookrightarrow (\ctgT E)/\Real\times\Real = \affctg{A}.
$$

\noindent Within this embedding, $\redctgT A$ is given in $\affctg{A}$ by the equation $\tilde{l}_1=1$.
Moreover, it is known (see \cite{GGU2}) that $\Phase A$ can be naturally identified with the following affine
subbundle of $\redctgT A$:
\be{e:PAinTstarA}
\Phase A = \{[\zw_v]\in \redctgT A: \langle \zw_v, (X_A)_v \rangle = 1 \},
\ee
hence $\Phase A$ is given in $\affctg{A}$ as
$$
\Phase A =  \{ [\zw]\in \affctg{A}: \tilde{l}_1([\zw]) = 1 = \tilde{l}_2([\zw]) \}.
$$
We claim that $\Phase A$ is isomorphic to the double affine bundle associated with the double vector bundle
$\affctg{A}$ and the functions $l_1$ and $l_2$. One easily checks that the projections from $\affctg{A}$ onto
$\und{E}$ and $\und{\vectdual{E}}$ correspond to the projections from $\Phase A$ onto $\und{A}$ and
$\und{\affdual{A}}$, respectively,  and moreover, the affine combinations in $\Phase A$ are compatible with
the homogeneous structures $\tilde{h}^1$, $\tilde{h}^2$ on $\affctg{A}$, what proves our claim. The
isomorphism $\affctg{A} \simeq \dblvhull{\Phase A}$ follows now from Proposition \ref{prop: isohulldvb}.

\epf

\begin{prop}\label{prop:modelledPA} The model double vector bundle of the double phase bundle $\Phase A$
is canonically isomorphic to the double vector bundle $(\ctgT F; F, \vectdual{F}; M)$ with
$F=\Vmodel(\und{A})$.
\end{prop}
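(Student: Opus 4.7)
The plan is to apply Proposition~\ref{prop: isohulldvb} to the description of $\Phase A$ inside its hull obtained in Theorem~\ref{thm:hullPA}. Since $\dblvhull{\Phase A} \simeq \affctg{A}$ and $\Phase A$ is cut out in $\affctg{A}$ by $\tilde{l}_1 = 1 = \tilde{l}_2$, that proposition will identify $\dblVmodel(\Phase A)$ with the double vector subbundle of $\affctg{A}$ defined by $\tilde{l}_1 = 0 = \tilde{l}_2$. Lifting this back to $\ctgT E$, with $E = \vhull{A}$, it becomes the image under the quotient map $\ctgT E \to \affctg{A}$ of
$$
W = \{\, \zw_v \in \ctgT E : v \in V,\ \langle \zw_v, (X_A)_v \rangle = 0 \,\},
$$
where $V := \ker \za_A = \Vmodel(A) \subset E$. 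Setting $F = \Vmodel(\und{A}) \simeq V/\langle v_A \rangle$, the core of the argument is then to identify $W/(\Real \times \Real)$ with $\ctgT F$.

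I would carry out this quotient in two stages. First, passing to $W/\zc_2$: because $V$ has codimension one in $E$ and $d\za_A$ spans its conormal bundle, the pullback along $V \hookrightarrow E$ gives a surjection $\ctgT E|_V \to \ctgT V$ whose kernel is precisely the $\zc_2$-orbit direction. Since $v_A \in V$, the vertical lift $X_A$ restricts along $V$ to the vertical lift $X_V$ of $v_A \in \Sec_M(V)$, so the condition $\langle \zw_v, (X_A)_v\rangle = 0$ descends to $\langle \xi, X_V\rangle = 0$, giving
$$
W / \zc_2 \;\simeq\; \{\, \xi \in \ctgT V : \langle \xi, X_V \rangle = 0 \,\}.
$$
Next, passing to the further quotient by $\zc_1$: the remaining $\Real$-action is free and proper translation along $v_A$ on $V$, with quotient $V/\langle v_A\rangle = F$. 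The function $\xi \mapsto \langle \xi, X_V\rangle$ is precisely the momentum map of the cotangent lift of this action, so standard cotangent-bundle reduction will yield $\{\xi \in \ctgT V : \langle \xi, X_V\rangle = 0\}/\Real \simeq \ctgT F$, producing the required bijection of total spaces.

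The last task is to verify compatibility with the double vector bundle structure. Using (\ref{e:lifts}) together with the canonical identifications $\vhull{\und{A}} \simeq \und{E}$ and $\vectdual{E} \simeq \Aff(A,\Real)$, I would check that the side projections of $\affctg{A}$ restrict on $\{\tilde{l}_1 = 0 = \tilde{l}_2\}$ to the cotangent projection $\ctgT F \to F$ and the vertical-restriction projection $\ctgT F \to \vectdual{F}$, where $F = \{l_1 = 0\} \subset \und{E}$ and $\vectdual{F} = \{l_2 = 0\} \subset \und{\vectdual{E}}$. A direct coordinate sanity check is available in the adapted coordinates $(x^a; y^i; p_a, \zp_i)$ from the proof of Theorem~\ref{thm:hullPA}: the submanifold $\tilde{l}_1 = y^{n+1} = 0$, $\tilde{l}_2 = \zp_0 = 0$ is coordinatized by $(x^a; y^1, \ldots, y^n; p_a, \zp_1, \ldots, \zp_n)$, which are exactly canonical Darboux coordinates on $\ctgT F$. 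The main obstacle is the cotangent reduction step: the action is plainly free and proper, but one must still confirm that the reduced space recovers the full cotangent bundle of $F$, with the homogeneous structures inherited from $\ctgT E$ coinciding with the canonical ones on $\ctgT F$.
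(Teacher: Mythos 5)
Your argument is correct and follows essentially the same route as the paper's: both identify $\dblVmodel(\Phase A)$ inside $\affctg{A}$ via Proposition~\ref{prop: isohulldvb} as the locus $\tilde{l}_1=0=\tilde{l}_2$, and then match it with $\ctgT F$ through the identifications $\ctgT V\simeq(\ctgT E)_{|V}/\Real$ and $\{\xi\in\ctgT V:\langle\xi,X_V\rangle=0\}/\Real\simeq\ctgT\bigl(V/\langle v_A\rangle\bigr)$. The only cosmetic difference is direction: the paper packages your two quotient stages as a single injection $\imath:\ctgT\und{\Vmodel(A)}\to\affctg{A}$ built from the pullback relation of $\Vmodel(A)\to\und{\Vmodel(A)}$, which is precisely the inverse of your zero-level cotangent reduction.
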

\bepf From Proposition \ref{prop: isohulldvb},
the model double vector bundle $\dblVmodel(\Phase A)$ is recognized in $\affctg{A}$ as the set of orbits
$[\zw]\in \affctg{A}$ such that $\tilde{l}_1([\zw])= 0 = \tilde{l}_2([\zw])$, i.e.
$$
\dblVmodel(\Phase A) = \{[\zw_v]\in \affctg{A}:  \zw_v\in \ctgT_v E, v\in \Vmodel(A)\subset E, \langle\zw_v,
(X_A)_v\rangle = 0 \}.
$$
We want to find a canonical isomorphism $\dblVmodel(\Phase A)\simeq \ctgT \Vmodel(\und{A})$. Note that
$\Vmodel(\und{A})\simeq \und{\Vmodel(A)}$, naturally. We shall find an injection $\imath: \ctgT
\und{\Vmodel(A)}\to \affctg{A}$. It can be defined as the following composition
$$
\xymatrix{\ctgT \und{\Vmodel(A)} \ar@{-|>}[r] & \ctgT \Vmodel(A)\simeq (\ctgT E)_{|\Vmodel(A)}/\Real \ar[r] &
(\ctgT E)_{|\Vmodel(A)}/\Real\times\Real \ar@{^(->} @<-0.4ex>[r]& \affctg{A}\simeq \ctgT E/\Real\times \Real
}\,,
$$
where $\xymatrix{\ctgT \und{\Vmodel(A)} \ar@{-|>}[r]& \ctgT \Vmodel(A)}$ is the induced pullback relation with
respect to the bundle projection $\Vmodel(\zz):\Vmodel(A)\to\und{\Vmodel(A)}$. It is easy to check  that
$\imath$ is an injective function, preserves the homogeneous structures and gives an isomorphism of $\ctgT
\und{\Vmodel(A)}$ with the image of $\imath$ which is clearly $\dblVmodel(\Phase A)$.

\epf

The core of the double affine bundle $\Phase A$ is $\ctgT M$. To put a special double affine structure on
$\Phase A$, suppose that there exist a nowhere vanishing $1$-form $\zw_M\in \zW^1(M)$ on $M$. We are going to
investigate the duals of the special double affine bundle $(\Phase A, \zw_M)$. For, we need first to describe
the dual of the vector bundle $\affctg{A}\to \und{E}$. Let us consider the subbundle
$$
\tgT^{\text{hor}}E := \{X_v\in \tgT_v E: \langle d_v\za_A, X_v\rangle = 0, v\in E\}
$$
of $\tgT E$. We call $\tgT^{\text{hor}}E$ the set of {\it horizontal} vectors. The $(\Real, +)$-action $\zvf$
on $E$ induced by translations along $v_A$ can be lifted to an action on $\tgT E$. It moves horizontal vectors
into horizontal ones, so we can consider the orbit space of this action restricted to $\tgT^{\text{hor}}E$:
$$
\afftg{A} := \{[X]: X\in \tgT^{\text{hor}}E \}\,,
$$
where $[X]$ denotes the orbit of $X\in \tgT^{\text{hor}}E$ of $(\Real, +)$-action on $\tgT E$ induced by
translations along $v_A$.

\begin{prop}\label{prop:modelledPA1} The vertical dual of the double vector bundle
$(\affctg{A}; \und{E}, \und{\vectdual{E}}; M)$  is canonically  isomorphic to
$$
\xymatrix{\afftg{A} \ar[r] \ar[d] & \tgT M \ar[d] \\
\und{E}\ar[r] & M }
$$
Its core bundle is the model vector bundle $\Vmodel(A)$ of $A$.
\end{prop}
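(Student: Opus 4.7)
The plan is to read off the vertical dual directly from the classical pairing between $\tgT E$ and $\ctgT E$ as vector bundles over $E$. Recall that the DVB $(\tgT E;E,\tgT M;M)$ is the vertical dual of $(\ctgT E;E,\vectdual E;M)$ via the canonical evaluation $\langle\zw,X\rangle$. By Theorem \ref{thm:hullPA}, $\affctg A=\ctgT E/(\Real\times\Real)$, where the two commuting actions are $\zc_1=(\zvf_{-t})^*$ (covering the $v_A$-translation $\zvf_t$ on $E$) and $\zc_2$ (the fiberwise shift by $d\za_A$). Hence over $[v]\in\und E$, the fiber of $\affctg A$ is the quotient $\ctgT_vE/\Real\cdot d_v\za_A$, with different orbit representatives identified via $\zc_1$.

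I would then dualize fiberwise and orbitwise. The annihilator of $\Real\cdot d_v\za_A$ in $\tgT_vE$ is exactly the horizontal subspace $\tgT_v^{\mathrm{hor}}E$; moreover, since $\zvf_t^*\za_A=\za_A$, the pushforward $\tgT\zvf_t$ preserves horizontality and implements the dual identification along the $\zvf$-orbit. This yields $\afftg A=\tgT^{\mathrm{hor}}E/\Real$ as the vertical dual, realized by the pairing
\[
\langle[\zw_v],[X_v]\rangle:=\langle\zw_v,X_v\rangle.
\]
Well-definedness: the $\zc_2$-ambiguity $\zw_v\mapsto\zw_v+t\,d_v\za_A$ is killed by horizontality of $X_v$; the $\zc_1$-ambiguity $(\zvf_{-s})^*\zw\leftrightarrow(\tgT\zvf_s)X$ is killed by naturality of pullback/pushforward. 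Non-degeneracy follows from the classical non-degenerate pairing together with the standard identity that the dual of a quotient is the annihilator of the subspace we quotient by.

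Finally, I would match the remaining DVB data. The coordinate analysis in the proof of Theorem \ref{thm:hullPA} identifies the core of $\affctg A$ as $\ctgT M$ (classes represented by $(x^a,0,p_a,0)$ after the $\Real\times\Real$-quotient); its dual $\tgT M$ is therefore the second side of $(\affctg A)^V$, and the induced projection $\afftg A\to\tgT M$ is $[X]\mapsto\tgT\zp_E(X)$, well defined because $\zvf_t$ covers $\mathrm{id}_M$. The core of $(\affctg A)^V$ is the dual of the other side $\und{\vectdual E}=\vectdual E/\langle\za_A\rangle$, i.e.\ the annihilator of $\za_A$ in $E$, which is precisely $\Vmodel(A)$. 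The main obstacle is essentially bookkeeping --- making sure the fiberwise annihilators and the base-level orbit quotients fit together coherently --- but once the pointwise picture is in place, smoothness descends from the submersion $\ctgT E\to\affctg A$ by standard arguments.
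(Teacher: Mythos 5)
Your argument is correct and follows essentially the same route as the paper: both proofs descend the canonical pairing $\langle\cdot,\cdot\rangle:\ctgT E\times\tgT E\to\Real$ to the quotient spaces, using horizontality of $X_v$ to absorb the $d_v\za_A$-shift ($\zc_2$) and invariance under the lifted $v_A$-translations to handle $\zc_1$, and then check that the two vector bundle structures on $\tgT E$ pass to $\afftg{A}\to\und{E}$ and $\afftg{A}\to\tgT M$. The only (cosmetic) divergence is in identifying the core: the paper computes it directly as the intersection of the kernels of the two projections on $\tgT^{\text{hor}}E/\Real$ and identifies it with vertical vectors along the zero section, whereas you read it off from the general fact that the core of a vertical dual is the dual of the second side bundle $\und{\vectdual{E}}$ --- both yield $\Vmodel(A)$.
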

\bepf
The natural pairing $\langle \cdot, \cdot \rangle: \ctgT E\times \tgT E\to \Real$ satisfies
$$
\langle d_v\za_A, X_v\rangle = 0
$$
for $X_v\in \tgT_v^{\text{hor}}E\subset \tgT_v E$ and is invariant with respect to $(\Real, +)$-actions on
$\tgT E$ and $\ctgT E$ induced by translations along $v_A$. Thus it induces a well-defined and non-degenerate
pairing $\affctg{A}\times \afftg{A}\to \Real$. It is also clear that the vector bundle structures $\tgT E\to
E$ and $\tgT E\to \tgT M$ can be passed to, respectively, $\afftg{A}\to \und{E}$ and $\afftg{A}\to \tgT M$. If
$X_v\in \tgT_v^{\text{hor}}E$ lies in the intersections of the kernels of the morphisms $\tgT^{\text{hor}}E\to
\tgT M$ and $\tgT^{\text{hor}}E\to\und{E}$, then $v$ lies in the line spanned by $v_A(m)$, for some $m\in M$
and $X_v$ is tangent to a fiber of $\Vmodel(A)\to M$. The  isomorphism of the core bundle with $\Vmodel(A)$
comes from the identification of the vector bundle $E$ with the subbundle $(VE)_{|M}\subset \tgT E$ of
vertical vectors, restricted to $M\subset E$.

\epf

By virtue of  Theorem \ref{thm:dual2}, the vertical dual of the special double affine bundle $(\Phase A,
\zw_M)$ is a double affine subbundle of $\afftg{A}$ described by means of linear functions $l_1$ and $l_3$,
where $l_1$ is given in (\ref{e:lifts}) and $l_3$ is the linear function on $\tgT M$ associated with $\zw_M$.
The subset of $\afftg{A}$ described by the pullbacks of linear functions $l_1$, $l_3$ is equal to
$$
\dualP A := \{ [X_v]: v\in A\subset E, X_v\in \tgT_v A \subset \tgT_vE, \langle (\zh^*\zw_M)_v, X_v \rangle =
1 \},
$$
where $\zh^*\zw_M$ is the pullback of $\zw_M$ to a $1$-form on $A$. The set $\dualP A$ is an affine subbundle
of the {\it reduced} tangent bundle $\tilde{\tgT}A\to\und{A}$ consisting of those orbits $[X_a]\in
\tilde{\tgT}A$, $X_a\in \tgT_aA$, such that $\langle (\zh^*\zw_M)_a, X_a\rangle =1$. The dual bundle to
$\und{\vectdual{E}}$ is clearly $\Vmodel(A)$ and the linear function $l_2$ on $\und{\vectdual{E}}$ corresponds
to $v_A\in\Sec(\Vmodel(A))$. We summarize the above discussion as follows.
\begin{prop}\label{prop:dualPA}
The vertical and horizontal duals of the special double affine bundle $(\Phase A, \zw_M)$ are naturally
isomorphic to
$$
\xymatrix{\dualP A \ar[r] \ar[d] & \subaff{(\tgT M)}{\zw_M } \ar[d]  &  &
\dualP \affdual{A} \ar[r] \ar[d] & \und{\affdual{A}} \ar[d]\\
\und{A}\ar[r] & M &  \text{\rm and}  &  \subaff{(\tgT M)}{-\zw_M }\ar[r] & M\,, }
$$
respectively, where $\subaff{(\tgT M)}{\zw_M } = \{X\in T M: \langle \zw_M, X\rangle = 1\}$ is an affine
subbundle of $\tgT M$. The core bundles are the special vector bundles ($\Vmodel(A), v_A)$ and
$(\Vmodel(\affdual{A}), 1_A)$, respectively.
\end{prop}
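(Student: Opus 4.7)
The strategy is to apply Theorem~\ref{thm:dual2} directly to the realisation of $(\Phase A,\zw_M)$ as a special double affine subbundle of its hull. By Theorem~\ref{thm:hullPA}, $\dblvhull{\Phase A}$ is the double vector bundle $(\affctg{A};\und{E},\und{\vectdual{E}};M)$, and $\Phase A$ is cut out in $\affctg{A}$ by the linear functions $\tilde{l}_1$ and $\tilde{l}_2$ of (\ref{e:lifts2}). Proposition~\ref{prop:modelledPA1} identifies the core of this hull as $\ctgT M$, so $\zw_M$ plays exactly the role of the distinguished core section demanded by Theorem~\ref{thm:dual2}.

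For the vertical dual, Proposition~\ref{prop:modelledPA1} also gives $\dblvecdual{\affctg{A}}{\und{E}}\simeq\afftg{A}$, so by Theorem~\ref{thm:dual2} the vertical dual of $(\Phase A,\zw_M)$ is the affine subbundle of $\afftg{A}$ cut out by $l_1=1$ on $\und{E}$ and by $l_3=1$ on $\tgT M$, where $l_3(X_m)=\langle\zw_M(m),X_m\rangle$ is the linear function associated with $\zw_M$. I then check: on $\afftg{A}$ the condition $l_1([X_v])=1$ forces $\za_A(v)=1$, so $v\in A$, and horizontality $\langle d_v\za_A,X_v\rangle=0$ then gives $X_v\in\tgT_vA$; the condition $l_3([X_v])=1$, pulled back along the projection $\afftg{A}\to\tgT M$, becomes $\langle\zh^*\zw_M,X_v\rangle=1$. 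These cut out precisely $\dualP A$, with side bundles $\und{A}$ (from $l_1=1$) and $\subaff{(\tgT M)}{\zw_M}$ (from $l_3=1$). The core is $\vectdual{\und{\vectdual{E}}}\simeq\Vmodel(A)$, whose distinguished section $l_2$ corresponds to $v_A$ under this identification.

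The horizontal dual is obtained by the symmetric argument applied to $\affdual{A}$ in place of $A$. Since $\vhull{\affdual{A}}\simeq\vectdual{E}$ and the roles of $v_A$ and $1_A$ are interchanged, Theorem~\ref{thm:hullPA} and Proposition~\ref{prop:modelledPA1} re-run for $\affdual{A}$ produce $\affctg{\affdual{A}}$ and $\afftg{\affdual{A}}$, and the canonical identification $\ctgT E\simeq\ctgT\vectdual{E}$ furnishes an exchange $\affctg{\affdual{A}}\simeq\dblvecdual{\affctg{A}}{\und{\vectdual{E}}}$ realising the horizontal dual of the hull. The same cutting-out argument then yields $\dualP\affdual{A}$ with core $(\Vmodel(\affdual{A}),1_A)$. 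The minus sign in $\subaff{(\tgT M)}{-\zw_M}$ arises because the canonical identification $\ctgT E\simeq\ctgT\vectdual{E}$ intertwines the two canonical symplectic structures only up to sign, and consequently transports the linear function associated with $\zw_M$ to minus its analogue on the $\affdual{A}$-side; this is consistent with the adjoint clause in Proposition~\ref{prop:affspduality}.

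The cutting-out calculation is routine. The main obstacle I anticipate is the explicit sign check in the horizontal case: tracing through the identification $\ctgT E\simeq\ctgT\vectdual{E}$ and its effect on the pairing between reduced cotangent and tangent bundles requires either a careful coordinate computation in the local frame $(x^a,y^i,p_a,\zp_i)$ of Theorem~\ref{thm:hullPA}, or an intrinsic argument based on the antisymmetry of the Liouville form.
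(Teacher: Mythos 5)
Your argument follows the paper's proof essentially step for step: the vertical dual is obtained by feeding the hull $\affctg{A}$, the functions $l_1,l_3$ and Proposition~\ref{prop:modelledPA1} into Theorem~\ref{thm:dual2} and cutting out $\dualP A$ inside $\afftg{A}$, while the horizontal dual is reduced to the vertical dual on the $\affdual{A}$-side via the Tulczyjew isomorphism $\zb_E$, the sign in $\subaff{(\tgT M)}{-\zw_M}$ coming from $\zb_E$ being minus the identity on the core $\ctgT M$ (equivalently, your antisymmetry remark). One formula should be corrected: $\zb_E$ induces an isomorphism of $\affctg{A}$ with the flip of $\affctg{\affdual{A}}$, not a duality, so the horizontal dual of the hull is $\afftg{\affdual{A}}$ rather than $\affctg{\affdual{A}}$, i.e. $\dblvecdual{\affctg{A}}{\und{\vectdual{E}}}\simeq \afftg{\affdual{A}}$; with that repaired, your cutting-out step is exactly the paper's.
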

\bepf
The form of the vertical dual of $(\Phase A, \zw_M)$ is already justified. Let us considered  the canonical
isomorphism $\zb_E$ from $\ctgT E$ to (the flip of) $\ctgT \vectdual{E}$ of double vector bundles, which in
the adapted coordinate systems $(x_a; y_i; p_b; \zp_j)$ and $(x_a; \zx_i; q_b; z_j)$ on $\ctgT E$ and $\ctgT
\vectdual{E}$, respectively,  has the following form
\be{e:TulczyjewIso}
\zb_E^*(x_a)=x_a, \quad \zb_E^*(\zx_i)=\zp_i, \quad \zb_E^*(q_b) = -p_b, \quad \zb_E^*(z_j) = y_j.
\ee
It has been first discovered by Tulczyjew for $E = \tgT M$ (\cite{Tu}). It is the identity on the side bundles
$E$, $\vectdual{E}$, and minus the identity on the core $\ctgT M$. It induces an isomorphism of the double
vector bundles $\affctg{A}$ and the flip of $\affctg{\affdual{A}}$, and also between the special double affine
subbundles $(\Phase A, \zw_M)$ and  the flip of $(\Phase \affdual{A}, -\zw_M)$. What we need now is the flip
of the vertical dual of $(\Phase \affdual{A}, -\zw_M)$, the form of which we derive from the diagram on the
left in the proposition.

\epf

We shall visualize the theorems from this section in two  diagrams for which we shall give now some
explanation. Let us first assume that $f: Z_1 \to Z_2$ is a morphisms between AV-bundles with the base map
$\und{f}:M_1\to M_2$. We have an induced relation, called the {\it phase lift} of $f$ and denoted by
$\xymatrix{\Phase f: \Phase Z_2 \ar@{-|>}[r]& \Phase Z_1}$. It consists of all pairs
$(\affdiff_{\und{f}(m)}\za_2, \affdiff_m\za_1)\in \Phase Z_2\times \Phase Z_1$ such that $f\circ \za_1 =
\za_2\circ \und{f}$, where $\za_i\in\Sec(Z_i)$, $i=1,2$. In case of trivial bundles, $Z_i=M_i\times\Real$, the
relation $\Phase f$ coincides with the phase lift $\xymatrix{\ctgT f: \ctgT M_2 \ar@{-|>}[r]& \ctgT M_1}$. We
shall apply the phase "functor" to the following sequence of AV-bundles
$$
\xymatrix{
A\ar[d] &  A\times\Real \ar[d]\ar@<4ex>[l]_q\ar@<-4ex>[r]^\iota & E\times \Real\ar[d] \\
 \und{A} & A  &  E
 }
$$
The right arrow $\iota$ is the injection morphism of trivial bundles. The left arrow $q$ is the affine bundle
morphism, $q(a, r) = a- r \cdot v_A(m)$, for $r\in \Real$ and $a\in A$ lying over $m\in M$. Since
$\Phase(A\times\Real)\simeq \ctgT A$, we get the following sequence of phase bundles and relations between
them:
\be{e:TstarEtoPA}
\xymatrix{\ctgT E \ar@{-|>}[r]^{\Phase\iota}& \ctgT A\ar@{-|>}[r]^{(\Phase q)^{-1}}& \Phase A}
\ee
A covector $\zb_v\in \ctgT_v E$ is in the relation $\Phase\iota$ with a covector $\za_a\in \ctgT_a A$ if and
only if $v\in A\subset E$, $a=v$, and $\zb_{a|\tgT_a A}=\za_a$. The relation $\xymatrix{\Phase q: \Phase
A\ar@{-|>}[r]& \ctgT A}$ consists of pairs $(\affdiff_{\und{a}}\zs, d_a f_\zs)$, where $\zz(a)=\und{a}$,
$\zs\in\Sec_{\und{A}}(A)$, and $f_\zs$ is given by (\ref{e:ProjOnDualA}). Hence $\Phase A$ is realized in
$\redctgT A$ as in (\ref{e:PAinTstarA}). The sequence (\ref{e:TstarEtoPA}) can be seen as a process of
generalized reduction. One can also pass from $\ctgT E$ to $\Phase A$ but with the first step being the
reduction of $\ctgT E$ to $\affctg{A}$. Then we can consider $\Phase A$ as a reduction of $\affctg A$. Note
that the induced relations preserve also the second canonical (affine or vector bundle) structure. Thus we get
a sequence of generalized morphisms (relations) of double affine bundles as it is depicted in the first
diagram below. The dual picture of  this diagram is given on the second one.

$$
\xymatrix{
&   &   &         & \affctg{A} \ar[ld]\ar[rdd]\ar@/^1pc/ @{-|>}^{(2,0)}[rrrddd]    &      &     &     &   \\
&   &   & \und{E}\ar@{-|>}^(0.2){(1, 0)}[rrrddd]  &                                &      &     &     &   \\
&   &   &         &                                & \und{\vectdual{E}} \ar@{-|>}_(0.7){(1, 0)} [rrrddd] &&     &   \\
& \ctgT E\ar@{-|>}^{(1,1)}[rrr]\ar[rdd]\ar[ld] \ar@/^1pc/_(0.3){(0,2)}[rrruuu] &   &  &  \ctgT A \ar[rdd]\ar[ld]
\ar@{-|>}^{(1,1)} [rrr] & &&  \Phase A\ar[ld]\ar[rdd]   &   \\
E\ar@/^3pc/^{(0,1)} [rrruuu] \ar @{-|>}^(0.3){(1,0)} [rrr] & &    & A\ar^(0.3){(0,1)}[rrr]  &  &   & \und{A} & & \\
& & \vectdual{E} \ar^{(0,1)}[rrr] \ar@/^3pc/^(0.6){(0,1)} [rrruuu] &  &  &  \und{\vectdual{E}} \ar^{(1, 0)}
@{-|>} [rrr]  & & & \und{\affdual{A}}
 }
$$

$$
\xymatrix{
&   &   &         & \afftg{A} \ar[ld]\ar[rdd]\ar@/^1pc/ @{-|>}^{(2,0)}[rrrddd]    &      &     &     &   \\
&   &   & \und{E}\ar@{-|>}^(0.2){(1, 0)}[rrrddd]  &                                &      &     &     &   \\
&   &   &         &                                & \tgT M \ar@{-|>}_(0.7){(1, 0)} [rrrddd] &&     &   \\
& \tgT E\ar@{-|>}^{(2,0)}[rrr]\ar[rdd]\ar[ld] \ar@/^1pc/ @{-|>}_{(1,1)}[rrruuu] &   &  &
\tgT A \ar[rdd]\ar[ld] \ar@{-|>}^{(1,1)} [rrr] & &&  \dualP A\ar[ld]\ar[rdd]   &   \\
E\ar@/^3pc/^{(0,1)} [rrruuu] \ar @{-|>}^(0.3){(1,0)} [rrr] & &    & A\ar^(0.3){(0,1)}[rrr]  &  &   & \und{A} & & \\
& & \tgT M \ar[rrr] \ar@/^3pc/ [rrruuu] &  &  &  \tgT M \ar @{-|>}^{(1,0)} [rrr]  & & & \subaff{(\tgT
M)}{\zw_M }
 }
$$
We can here distinguish two types of relations: injections (or inverses of injections) and projections. The
relation $\xymatrix{\ctgT E \ar@{-|>}[r]& \ctgT A}$ is a composition of the inverse of the injection $(\ctgT
E)_{|A}\to \ctgT E$ and the projection onto $(\ctgT E)_{|A}/\Real\simeq \ctgT A$, so it is of mixed type.
Similarly for the reduction $\xymatrix{\ctgT A\ar@{-|>}[r]& \Phase A}$. The reduction $\xymatrix{\ctgT E
\ar[r] & \affctg{A} \ar@{-|>}[r] & \Phase A}$ is the composition of the projection and the inverse of the
injection of $\Phase A$ into $\affctg{A}$. We have put labels $(i, j)$ on some of the arrows in the diagrams
to indicate how many injections $(i)$ and projections $(j)$ it involves.

\section{Contact bundles and double affine duals}

In the previous section we considered the double affine phase bundle $\Phase A$ in the category of special
double affine bundles. In order to do this we needed to distinguish a section of the core bundle which in our
case meant to choose a $1$-form  $\zw_M$ on the base manifold $M$. However, there are purely canonical
examples of special affine bundles. The affine {\it contact bundle} $\Contact A$ (\cite{GGU2}) is such a one.
Let us describe its double affine structure and the duals of $\Contact A$.

Let us first recall what the affine contact bundle is. Assume that $\zz:(Z, 1_M)\to M$ is an $AV$-bundle. A
first-jet of a section $\zs\in\Sec(\zz)$ at $m\in M$ is the class of $\zs$ subject to the following
equivalence relation:
$$
\zs\sim\zs' \quad\text{if and only if}\quad \zs(m)=\zs'(m) \, \text{and}\, d_m(\zs-\zs')=0.
$$
We shall denote such a class by $\cdiff_m\zs$. Note that $\zs-\zs'$ is a section of the model bundle
$M\times\Real\to M$ and so  it can be seen as a function on $M$, hence the differential of $\zs-\zs'$ makes
sense. The affine contact space $\Contact Z$ is a collection of $\cdiff_m\zs$ as $m$ varies through $M$ and
$\zs\in Sec(Z)$. It is an affine bundle over $Z$ (modelled on $\ctgT M$) and an $AV$-bundle over $\Phase Z$:
$$
\xymatrix{\Contact Z \ar[r]^{\zz_{\Contact Z}} \ar[d]^{\zm} & \Phase Z \ar[d]^{\Phase\zz} \\
Z \ar[r]^{\zz} & M }.
$$
It is also an affine bundle over $M$ modelled on $\ctgT M \oplus \Real$, with the affine structure defined by
\be{e:CAaffstr}
\cdiff_m \zs_2 - \cdiff_m \zs_1 = (d_m(\zs_2-\zs_1), \zs_2(m)- \zs_1(m)).
\ee
In other words, $\Contact A$ is the first-jet bundle of $\zz$ (\cite{S}).

Now let $(\zh, v_A)$, $\zh:A\to M$, $v_A\in \Sec(\zh)$, be a special affine bundle. The affine contact bundle
of $A$, denoted by $\Contact \zz: \Contact A\to\und{A}$, is by definition the affine contact bundle of the
$AV$-bundle $\zz:A\to\und{A}$. The contact manifold $\Contact A$ is naturally fibred over $A$, and  also  over
$\affdual{A}$ with the projection $\Contact A\to \affdual{A}$ given by $\cdiff_{\und{a}}\zs \mapsto f_\zs$,
where  $f_\zs \in \affdual{A}_m$ is defined in (\ref{e:ProjOnDualA}), and $m=\zh(a)$. The image of the
projection $\Contact A\to A\times_M\affdual{A}$ is $\{(a, f): a\in A_m, f\in\affdual{A}_m, m\in M, f(a) =
0\}$, hence the condition $(ii)$ in the definition  of a double affine bundle is not satisfied. However, we do
get a special double affine bundle structure on $\Contact A$ but fibred over $\und{A}$ and
$\und{\affdual{A}}$:
\be{d:CA}
\xymatrix{\Contact A \ar[r]^{\affdual{\Contact }\zz} \ar[d]^{\Contact \zz} & \und{\affdual{A}}
\ar[d]^{\und{\affdual{\zh}}} \\
\und{A}\ar[r]^{\und{\zh}} & M }.
\ee
Since the core of the affine double bundle $\Phase A$ is isomorphic to $\ctgT M$, the core bundle of $\Contact
A$ is the special vector bundle $(\ctgT M\oplus\Real, (0,1_M))$. There is a canonical isomorphism $\zk:
\Contact A\to \Contact \affdual{A}$ (\cite{U}), which is a morphism of  double affine bundles. It is identity
on the bases $\und{A}$, $\und{\affdual{A}}$ and induces also isomorphism of $\Phase A$ and $\Phase\affdual{A}$
of double affine bundles:
\be{d:LCA}
\xymatrix{ & \Contact A \ar[d] \ar[rrr]^{\zk} &&& \Contact \affdual{A}\ar[d] &  \\
 & \Phase A \ar[dl] \ar[dr] \ar[rrr] &&& \Phase\affdual{A}\ar[dl] \ar[dr] &  \\
\und{A}\ar[dr] \ar@/^1pc/^(0.3){id}[rrr] &&   \und{\affdual{A}}\ar[dl] \ar@/_1pc/^(0.7){id}[rrr] &
\und{A}\ar[dr] & & \und{\affdual{A}}\ar[dl] \\
& M \ar[rrr]^{id} &&& M & }
\ee

Let us analyze the vertical dual of (\ref{d:CA}). For any $AV$-bundle $\zz:Z\to M$, the affine dual of
$\Contact \zz:\Contact A\to M$ is $\redtgTbar \zz : \redtgTbar Z\to M$, where by definition (\cite{U})
$\redtgTbar Z$ is the space of orbits of the following diagonal action $\zD$ on $\tgT Z$:
$$
(t, v) \mapsto (\zvf_t)_* (v + t\cdot X_Z(z)),
$$
for $v\in \tgT_zZ$, $z\in Z$, $t\in \Real$, where $\zvf_t: Z\to Z$, $z\to z+t$ is the canonical action of
$\Real$ on $Z$, and $X_Z\in\Sec_Z(TZ)$ is the fundamental vector field of this action. In the adapted
coordinate system $(x_a, s; \dot{x}_a, \dot{s})$ on $TZ$ we have $X_Z = -\partial_s$ and the diagonal actions
read as
$$
(t, (x_a, s;\dot{x}_a, \dot{s}))\mapsto (x_a, s+t; \dot{x}_a, \dot{s}-t).
$$
The model vector bundle of $\redtgTbar\zz$ is the reduced tangent bundle $\redtgT \zz: \redtgT Z\to M$. Since
the vector field $X_Z$ is invariant it can be seen as a section of $\redtgT \zz$. We shall consider
$\redtgTbar Z$ as a spacial affine bundle over $M$ with the distinguished section given by $X_Z$. Let us treat
$\Contact Z$ as an affine hyperbundle of $\ctgT Z$ under the canonical embedding $\cdiff_m\zs \mapsto \zw \in
\ctgT_{z} Z$, defined by $\langle \zw, v \rangle X_Z(z) =v-v'$,  where $z= \zs(m)$ and $v'\in \tgT_zZ$ is the
vertical projection of $v\in \tgT_zZ$ onto the tangent space to the image of $\zs$. Then
$$
\Contact Z \simeq \{\zw_z\in \ctgT Z : \langle \zw_z, X_Z(z)\rangle = 1, z\in Z \}.
$$
We have an obvious bi-affine special pairing $\Contact Z \times \redtgTbar Z \to \Real$ induced by the one of
cotangent and tangent bundles, which gives the desired isomorphism $\affdual{(\Contact Z)} \simeq \redtgTbar
Z$ of special affine bundles over $M$. One can prove that $\redtgTbar Z$ is the {\it bundle of affine
derivations} on $Z$ with values in $Z$ denoted by $\Aff_M(\Phase Z, Z)$ (\cite{GGU2}).

Now we apply the above observations to the AV-bundle $\zz:A\to\und{A}$ and find that, according to Theorem
\ref{thm:dual}, the vertical dual of (\ref{d:CA}) is
\be{e:TangentDual}
\xymatrix{\redtgTbar A \ar[r]^{\redtgTbar\zh} \ar[d]^{\zt} & \tgT M \ar[d]^{\zt_M} \\
 \und{A}\ar[r]^{\und{\zh}} & M }
\ee
since $\ahyp{D}_3  = \ahyp{(\ctgT M\oplus \Real)} = \tgT M$. Note that the map $T\zh: \tgT A \to \tgT M$
factors to a map from $\redtgTbar A$ which is an affine bundle over $TM$. The core of (\ref{e:TangentDual}) is
$(\und{\affdual{A}})^{\dagger} \simeq \Vmodel(A)$ which is a special vector bundle. Indeed, as
$\Aff(\affdual{A}, \Real) = (\affdual{A})^{\dagger} \simeq \vhull{A}$, the elements of
$(\und{\affdual{A}})^{\dagger}$ correspond to the vectors $v\in \vhull{A}$ such that the special element $1_A$
of $\vectdual{(\vhull{A})}$ annihilates $v$, and so $v\in\Vmodel(A)$. The described isomorphism preserve also
the distinguished elements: the constant function equal $1$ on $\und{\affdual{A}}$ and $v_A\in
\Sec(\Vmodel(A))$. This way we have discovered another canonical object in the category of special double
affine bundles.

\medskip
In a vector bundle setting, the cotangent bundle $\ctgT E$ is a very interesting and intriguing object. Recall
that, if $E$ is an $n$-vector bundle and $\zD_i$, $i=1, \ldots, n$, are the  Euler vector fields corresponding
to the $n$-vector bundle structures on $E$, then $\ctgT E$ is canonically an $(n+1)$-vector bundle, whose
vector bundle structures are encoded in the Euler vector fields $\ctgT \zD_i$, which is by definition the
phase lift of $\zD_i$, and the natural cotangent vector bundle structure. Moreover, the canonical symplectic
structure on $\ctgT E$ gives the pairings between $E$ and the other bases of the  side bundles of $\ctgT E$
(\cite{GR}). In the following we are going to find a similar passage but in an affine setting.

Let $(\zh:A\to M, v_A)$ be a special affine bundle, $E:=\vhull{A}$ be its vector hull and let
$\za_A\in\Sec(\vectdual{E})$ be the distinguished section. We shall consider $\za_A$ and $v_A$ as linear
functions on $E$ and $\vectdual{E}$, respectively, and then as the functions on $\ctgT E$ via pullbacks with
respect to the canonical projections $\ctgT E\to E$ and $\ctgT E\to \vectdual{E}$. Let us define $\dblB A$ as
a double affine subbundle $(\ctgT E; E, \vectdual{E}; M)$ determined by the following  linear functions on $E$
and $\vectdual{E}$: $\za_A$ and (evaluation on) $v_A$. One easily finds that
$$
\dblB A = \{\zw_x\in \ctgT_x E: x\in A\subset E, \langle\zw_x, X_A(x)\rangle = 1\},
$$
where $X_A\in\Sec_E(\tgT E)$ is the vertical lift of $v_A$. As $\dblB A$ is a canonical double affine bundle
with side bundles being dual special affine bundles $A$ and $A^\#$, we will call it the {\it double affine
dual bundle} of $A$ (or $A^\#$).

Note that $\dblB A$ can also be described as the set of first jets $j^1_{\und{a}}\zs$ of the bundle $E\to
\und{E}$ at points $\und{a}\in \und{A}\subset \und{E}$. It is also possible to present $\dblB A$ as a middle
step of the reduction from $\ctgT E$ to $\Phase A$:
$$
\xymatrix{& \affctg A \ar@{-|>}[rd]^{(2, 0)} &  \\
\ctgT E \ar@{-|>}[r]^{(1, 1)}  \ar[ru]^{(0,2)}\ar@{-|>}[rd]_{(2, 0)} & \ctgT A\ar@{-|>}[r]^{(1, 1)}& \Phase A \\
&  \dblB A \ar [ru]_{(0,2)}& }
$$
The projections in the diagram
$$
\xymatrix{\dblB A \ar[r]^{\zp_{\affdual{A}}} \ar[d]^{\zp_A} & \affdual{A} \ar[d] \\
A \ar[r] & M }
$$
read as $\zp_A(\zw_x) =x$ for $\zw_x\in \ctgT_x E\cap\dblB A$, $\zp_{\affdual{A}}(\zw_x):A_m\to\Real$ is the
composition
$$
A_m \hookrightarrow E_m \simeq \tgT_x E_m \stackrel{\zw_x}{\rightarrow} \Real,
$$
where $x\in E_m$ and $\zw_x$ is restricted to the tangent space to the fiber $E_m$. Let $(y_i)$ and $(\zx_i)$
be local bases  of sections of $\vectdual{E}$ and $E$, respectively, such that $y_i(\zx_j) = \zd_{ij}$. We
shall consider them also as linear functions on dual bundles. Let $\zb_E: \ctgT E\to \ctgT \vectdual{E}$ be
the Tulczyjew isomorphism (\ref{e:TulczyjewIso}), which in the adapted coordinate systems $(x_a; y_i; p_b;
\zp_j)$ and $(x_a; \zx_i; q_b; z_j)$ on $\ctgT E$ and $\ctgT \vectdual{E}$, respectively, reads as $\zb_E(x_a;
y_i; p_b; \zp_j)=(x_a; \zp_i; -p_b; y_j)$. The restriction of $\zb_E$ to $\dblB A$ induces an isomorphism of
double affine bundles $\dblB A$ and (the flip of) $\dblB \affdual{A}$:
$$
\xymatrix{
& \ctgT E     \ar[rrr]^{\zb_E} &&& \ctgT \vectdual{E} & \\
& \dblB A  \ar@{^(->}[u]\ar[ldd]\ar[rd]\ar[rrr]^\zb      &&& \dblB \affdual{A}\ar@{^(->}[u]\ar[ld]\ar[rdd] & \\
&         & \affdual{A}\ar[ldd] \ar[r]^{id} &  \affdual{A}\ar[rdd]  &  &  \\
A \ar[rd]\ar[rrrrr]^{id} &                  &    &  & &  A\ar[ld] \\
& M\ar[rrr]^{id} &&& M & }
$$
The space $\dblB A$ is preserved by the action $\zc = (\zc_1, \zc_2)$ (defined in (\ref{e:action1}),
(\ref{e:action})) of $\Real\times\Real$ on $\ctgT E$. We now consider $\zc=\zc^A$ (respectively,
$\zc^{\affdual{A}}$) as an action on $\dblB A$ (respectively, $\dblB \affdual{A}$). The orbit space of $\zc$,
$\dblB A /{\Real\times\Real}$, is canonically identified with the affine phase bundle $\Phase A\subset
\redctgT A$ (\cite{GGU2}). The isomorphism $\zb_E$ moves the orbits of $\zc^A$ into the orbits of
$\zc^{\affdual{A}}$, and so it induces an isomorphism of the orbit spaces, $\Phase A$ and $\Phase\affdual{A}$.
Moreover, the orbit space of $\zc^A_ 2$, $\dblB A /{\{0\}\times \Real}$, can be naturally identified with the
affine contact bundle $\Contact A$. Indeed, thanks to our convention of the distinguished section of the model
bundle of an AV-bundle, $\Contact A$ is identified with the affine hyperbundle of $\ctgT A$ of those covectors
$\zw\in \ctgT_a A$, $a\in A$, such that $\langle \zw, X_A(a) \rangle =1$, where $X_A$ is the vertical lift of
$v_A$.  Beside, each orbit of $\zc^A_2$ (respectively, $\zc^A_1$) is moved by $\zb_E$ to an orbit of
$\zc^{\affdual{A}}_1$ (respectively, $\zc^{\affdual{A}}_2$).  It turns out that the orbit spaces of $\zc_1$
and $\zc_2$ are canonically isomorphic:

\begin{theo}There is a canonical isomorphism
$$
\zt: \dblB A/{\{0\}\times\Real} \to \dblB A/{\Real\times \{0\}}.
$$
The composition
$$
\Contact A\simeq \dblB A/{\{0\}\times\Real} \stackrel{\zt}{\rightarrow} \dblB A/{\Real\times \{0\}}
\stackrel{\zb}{\rightarrow} \dblB \affdual{A}/{\{0\}\times\Real}\simeq \Contact \affdual{A}
$$
is an isomorphism of double affine bundles. However, it moves the distinguished section $(0, 1_A)$ of the
model vector bundle $\ctgT \und{A}\oplus \Real$ into the section $(0, -1_{\und{\affdual{A}}})$ and so it
establishes an isomorphism of special affine bundles
$$
\Contact \bar{A} \simeq \Contact \affdual{A},
$$
where $\bar{A} = (A, -v_A)$ is the adjoint of $A$.
\end{theo}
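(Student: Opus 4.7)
My strategy is to define $\zt$ explicitly in adapted coordinates on $\ctgT E$, argue its canonicity via a symplectic-theoretic characterization, and then compute $\zb\circ\zt$ to identify it with the contact isomorphism $\zk$ of (\ref{d:LCA}) up to a precise sign on the core.

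In the adapted coordinates $(x_a; y^i; p_b; \zp_j)$ on $\ctgT E$, the subbundle $\dblB A$ is cut out by $y^{n+1} = 1$ and $\zp_0 = 1$, with $\zc_1$ translating $y^0$ and $\zc_2$ translating $\zp_{n+1}$. Both $\dblB A/\zc_2$ and $\dblB A/\zc_1$ thus share the coordinates $(x_a; y^1, \ldots, y^n; p_b; \zp_1, \ldots, \zp_n)$, each distinguished by just one extra free coordinate ($y^0$ and $\zp_{n+1}$, respectively). I would define $\zt$ by $\zt^*(\zp_{n+1}) = y^0$ and preserving all other coordinates. To see canonicity, I would observe that $\dblB A$ is coisotropic in $\ctgT E$, cut out by the two Poisson-commuting functions $\za_A - 1$ and $\zp_0 - 1$, and that the actions $\zc_1, \zc_2$ are precisely the corresponding Hamiltonian flows (the characteristic directions). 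The partial reductions $\dblB A/\zc_i$ are then intrinsically determined, and the canonical symplectic pairing between the two characteristic directions induced by the ambient symplectic form on $\ctgT E$ provides the desired identification $\zt$; in coordinates it agrees with the formula above. Compatibility with the double affine bundle structures follows from the symmetric roles of the two characteristics: both quotients project canonically onto the common reduction $\Phase A$, and the coordinate formula for $\zt$ visibly respects the affine structures on the fibers.

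Next I would compute $\zb\circ\zt$ using the Tulczyjew formulas $\zb_E^*(\zx_i) = \zp_i$, $\zb_E^*(q_b) = -p_b$, $\zb_E^*(z_j) = y_j$, transferring the result via the identifications $\dblB A/\zc_2 \simeq \Contact A$ and $\dblB \affdual{A}/\zc_2^{\affdual{A}} \simeq \Contact \affdual{A}$ already established in this section, to obtain a double affine bundle morphism $\Contact A \to \Contact \affdual{A}$. Tracing through, this map agrees on the underlying double affine bundles with the canonical isomorphism $\zk$ of (\ref{d:LCA}); the only discrepancy is the minus sign $\zb_E^*(q_b) = -p_b$, which propagates to the cotangent-momentum component of the core $\ctgT M \subset \ctgT\und{A}\oplus\Real$ and thereby sends the distinguished section $(0, 1_A)$ to $(0, -1_{\und{\affdual{A}}})$. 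This is exactly the final assertion: the composition is an isomorphism of special affine bundles $\Contact \bar{A} \simeq \Contact \affdual{A}$, where the sign flip identifies the adjoint of $\Contact A$ with $\Contact \affdual{A}$. The main obstacle is the canonicity of $\zt$; the coordinate formula is direct, but its independence from the adapted trivialization must be argued carefully, and the symplectic-characteristic description is the cleanest route, reducing the issue to the intrinsic structure of coisotropic reduction by a pair of Poisson-commuting functions.
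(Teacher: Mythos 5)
There is a genuine gap, and it sits exactly where you flagged the ``main obstacle'': your explicit formula for $\zt$ is not the right one, and no canonicity argument can repair it because the formula itself is not coordinate-independent. You propose $\zt^*(\zp_{n+1})=y^0$ with all other coordinates preserved. Take the simplest test: a change of adapted basis with $y^{0\prime}=y^0+y^1$ and all other $y^i$ fixed (this preserves $\za_A=y^{n+1}$ and $v_A=\zx_0$, hence is admissible); dually $\zp_1'=\zp_1-\zp_0$ and $\zp_{n+1}'=\zp_{n+1}$. Then your rule gives target $\zp_{n+1}'=y^0$ while it should give $y^{0\prime}=y^0+y^1$, so the map depends on the trivialization. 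The correct formula, which is the actual content of the paper's proof, is
$$
\zp_{n+1}\circ\zt \;=\; -y^0-\sum_{i=1}^n y^i\zp_i \;=\; \zp_{n+1}-\sum_{i=0}^{n+1}y^i\zp_i \quad\text{on } \dblB A
$$
(using $\zp_0=1$, $y^{n+1}=1$); the subtracted term is the invariant Liouville-type function $\sum_i y^i\zp_i$, and verifying that this corrected expression transforms into itself under admissible changes of adapted coordinates is precisely the computation the proof must carry out. Without the correction term the composition $\zb\circ\zt$ also fails to land on the canonical isomorphism $\zk$ of (\ref{d:LCA}).

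Your proposed intrinsic justification does not close this gap either. It is true that $\dblB A$ is coisotropic of codimension two, cut out by the Poisson-commuting functions $\za_A-1$ and $\zp_0-1$, with $\zc_1,\zc_2$ generated by the corresponding Hamiltonian vector fields. But precisely because these functions Poisson-commute, the symplectic form pairs the two characteristic directions to zero: $\zw(X_{\za_A},X_{\zp_0})=\{\za_A,\zp_0\}=0$. So there is no ``canonical symplectic pairing between the two characteristic directions'' that singles out an identification of the two partial quotients $\dblB A/\zc_2$ and $\dblB A/\zc_1$; the intrinsic structure you invoke determines the common full reduction $\Phase A$ but not the map $\zt$ between the intermediate quotients. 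To fix the proof you must either exhibit the corrected coordinate formula and check its invariance directly (as the paper does), or find a genuinely different intrinsic description of $\zt$ (for instance via the tautological one-form, whose fiberwise contraction produces the term $\sum_i y^i\zp_i$), and then redo the computation of $\zb\circ\zt$ and of the image of the distinguished section $(0,1_A)$ with the corrected map.
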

\bepf
Let us choose the local basis   $(y_i)$ and its dual $(\zx_i)$ of sections of $\vectdual{E}$ and $E$,
respectively, in such a way that $y_{n+1} = \za_A$ and $\zx_0 = v_A$. Let us define $\zt$ in the adapted
coordinates $(x_a, y_i; p_b; \zp_j)$  on  $\ctgT E$ by the following formula
$$
\zt(x_a; y_0, \ldots, y_n, 1; p_b; 1, \zp_1, \ldots, \zp_n, \Real) = (x_a; \Real, y_1, \ldots, y_n, 1; p_b; 1,
\zp_1, \ldots, \zp_n, -y_0-\sum_{i=1}^ny_i\zp_i),
$$
where the argument of $\zt$ is an orbit of the action $\zc_2$ of an element from $\dblB A$ and similarly the
value of $\zt$ is an orbit of $\zc_1$. Let us check that $\zt$ is defined independently of the choice of
coordinates. For another choice of local sections $(y_i')$ and $(\zx_i')$ we have
$$
\zp_i' = \sum_i a_{ij}\zp_j,
$$
for a matrix $(a_{ij})$ with entries in $C^\infty(M)$ which satisfies: $a_{00}=a_{n+1, n+1} =1$, $a_{0j} = 0$
for $j>0$, $a_{i, n+1} = 0$ for $1\leq i\leq n+1$. These conditions on the matrix $(a_{ij})$ comes from the
fact that the transition maps preserve the special elements $v_A$ and $\za_A$. Let us calculate
$\zp_{n+1}'\circ \zt$. We have
\bea
\zp_{n+1}'\circ\zt &=&  \sum_{j=0}^N a_{n+1, j}\zp_j\circ \zt + \zp_{n+1}\circ \zt =
\sum_{j=0}^n a_{n+1, j}\zp_j  - y_0 - \sum_{i=1}^n y_i\zp_i \nn \\
&=& \sum_{j, k} a_{n+1, j}a^{jk}\zp_k' - \zp_{n+1} - y_0 -
\sum_{i,j,k} a_{ji}y_j'a^{ik}\zp_k' +y_0\zp_0 + y_{n+1}\zp_{n+1} \nn \\
&=& \zp_{n+1} - \sum_{j=0}^{n+1} y_j'\zp_j' = -y_0' - \sum_{j=1}^n y_j'\zp_j',\nn
\eea
thanks to $\zp_0 = 1$ and $y_{n+1}=1$. The other equalities $y_i'\circ \zt = y_i'$ for $i\neq 0$, $\zp_j'\circ
\zt = \zp_j'$ for $j\neq n+1$ and $p_a'\circ\zt =p_a'$ follow immediately. It is also clear that $\zb\circ\zt
: \Contact A \to \Contact \affdual{A}$ has the desired properties. \epf
\section{$n$-affine bundles}
A notion of a double affine bundle has an obvious generalization for a manifold which is equipped with more
than two affine structures. One can view an $n$-tuple affine bundle $A$ ($n$-affine, in short), $n\in\N$, as
an object which is build by means of gluing trivial ones, which by definition are of the form $A_\za :=
U_\za\times K$, where $(U_\za)$ is a covering of the base manifold $M$, $K = \prod_\ze K_\ze$,  the product
over ${\ze \in\{0, 1\}^n}$, $\ze\neq 0^n$, and $K_\ze$ are fixed vector spaces. The manifold $A_\za$ has $n$
natural affine  structures over the bases $A_{\za, i} = U_\za\times\prod_{\ze(i)=0} K_\ze$ and each of
$A_{\za, i}$ is an $(n-1)$-affine bundle. In order to preserve these affine structures, the gluing maps
$\zvf_{\za, \zb}:U_{\za\zb}\times K\to U_{\zb\za}\times K$, where $U_{\za\zb} = U_\za\cap U_\zb$, have to
preserve the "filtration" of the structure sheaf. We shall explain this condition more precisely. Let us
assign the degree $\ze\in\Zet^n$ to linear coordinates on $K_\ze$ and degree $0$ to functions on $U_\za$ and
let ${\cal F}_\za$ be the algebra generated by $C^\infty(U_\za)$ and linear functions on $K_\ze$ as $\ze$
varies in $\{0, 1\}^n$, $\ze\neq 0^n$. The algebra ${\cal F}_\za$ is a subalgebra of smooth functions on
$A_\za$. Let ${\cal F}_{\za, \zm}$ be the subspace of those elements in ${\cal F}_\za$ which are of degree
less or equal $\zm\in\{0, 1\}^n$ with respect to the  product (partial) order on $\Zet^n$. The condition for
preserving the affine structures is that the pullback of a function $f\in {\cal F}_{\zb, \zm }$ restricted to
$U_\za$ with respect to $\zvf_{\za, \zb}$, $f_{|U_\za\times K}\circ \zvf_{\za, \zb}$,  is a function of degree
less or equal $\zm$, for $\zm\in\{0, 1\}^n$. In case $n=2$ we recover that this condition says that gluing
transformations are of the form (\ref{coord}), where the coordinates $y^j$, $z^a$, $c^u$ have been assigned
degrees $(0,1)$, $(1,0)$ and $(1,1)$, respectively. Note that the algebra
$$
{\cal{A}} := \{ f\in C^{\infty}(A): f_{|A_\za}\in {\cal F}_{\za} \}
$$
is not graded but filtered by $\Zet^n$, i.e. for $\zm\in\Zet^n$ there are distinguished subspaces ${\cal
A}_\zm$ of ${\cal A}$ such that ${\cal A}_{\zm_1}\subset {\cal A}_{\zm_2}$ whenever $\zm_1$ is less or equal
$\zm_2$ with respect to the product partial order on $\Zet^n$ and $\bigcup_{\zm} {\cal A}_\zm = {\cal A}$.

Alternatively, one can define an $n$-affine bundle $A$ as a subset of an $n$-vector bundle $E$ determined by
$n$ functions $l_1, \ldots, l_n$, where the degree of $l_i$ is $\ze_i\in \Zet^n$, $\ze_i(j)=\zd_{ij}$, by
setting
$$A = \{x\in E: l_i(x)=1\, \text{for}\, i=1, \ldots , n\},$$
with obvious $n$-affine structures inherited from $E$. Here we consider $E$ as a $\Zet^n$-graded manifold, as
in (\cite{GR}). Its structure sheaf is generated only by functions of degrees from $\{0, 1\}^n$.

Let us assume now that $A$ is an $n$-affine bundle given in an $n$-vector bundle $E$ by linear functions $l_1,
\ldots, l_n$, where the degree of $l_i$ is $\ze_i\in\Zet^n$, $\ze_i(j)=\zd_{ij}$. Let us consider the
$(n+1)$-vector bundle $\ctgT E$ as $\Zet^{n+1}$-graded manifold, as in (\cite{GR}). If $(y_\za^j)$ is the
adapted coordinate system on $E$, where the degree of $y_\za^j$ is $\za\in\Zet^n$, $\za(k)\in\{0, 1\}$, for
$k=1, \ldots, n$, then the induced coordinate system on $\ctgT E$ is of the form $(y_\za^j, p_\za^j)$, where
$p_\za^j$ are the corresponding momenta which have the degree equal to $|p_\za^j| = -\za +1^{n+1} \in
\Zet^{n+1}$, where $1^k \in \Zet^k$  denotes the vector of  ones and we assumed that $\za(n+1)= 0$. To get an
$(n+1)$-affine subbundle of $\ctgT E$ out of the linear functions $l_1, \ldots, l_n$, we need to choose
additionally a linear function of degree $\ze_{n+1}\in\Zet^{n+1}$. Let us recall that for any $n$-vector
bundle $E$, the core $C$ of $E$ (sometimes called ultracore (\cite{Mac})) is defined as $C = \cap_{i=1}^n\ker
\zp_i$, where $\zp_i:E\to E_i$, $i=1, \ldots, n$, are the side bundles of $E$. In the adapted local coordinate
system $(y_\za^j)$ on $E$,  the core $C$  is given by
$$
C=\{x\in E: y_\za^j(x)=0\,\text{for}\, \za\neq1^n\, \text{and}\, \za\neq 0^n\}.
$$
The core $C$ is a vector bundle over $M$, the total base of $E$. There is a canonical action of the core
bundle on the fibration $E\to M$. In the adapted local coordinate system, for $c\in C$ and $v\in E$ lying over
the same point $m\in M$, it reads as
$$
y_\za^j(c+v) := \left\{
\begin{array}{ll}
y_\za^j(c) + y_\za^j(v) & \text{for } \za=1^n, \\
y_\za^j(v) & \text{otherwise.}
\end{array}
\right.
$$
Note that $\zp_j(c+v) = \zp_j(v)$ for any $j=1,2, \ldots, n$ and $v\in A\subset E$  if and only if $c+v \in
A$. The canonical projection $\ctgT E\to \vectdual{C}$ can be defined as the restriction of $\zw : \tgT_vE \to
\Real$ to the tangent space at $v\in E$ of the orbit $v+ C_m$ under the natural identification $\tgT_v(v +
C_m) \simeq C_m$, where $C_m$ is a fiber of $C\to M$ over $m$. A function $l_{n+1}$ on $\ctgT E$ of degree
$\ze_{n+1}$ has a local form $\sum_j f_j p_\za^j$, where $\za = 1^n\in\Zet^n$ and $f_j\in C^\infty(M)$. Hence
$l_{n+1}$ is a pullback of a linear function on $\vectdual{C}\to M$ with respect to the mentioned projection
$\ctgT E\to \vectdual{C}$ and  so it corresponds to  a nowhere vanishing section of the core bundle $C\to M$.
This way we arrived at a definition of a special $n$-affine bundle.
\begin{definition} {\rm A {\it special} $n$-affine bundle is an $n$-affine bundle together with a nowhere
vanishing section of the core bundle.}
\end{definition}
Let $A$ be a special $n$-affine  bundle given in $E$ by means of $l_1, \ldots, l_{n+1}$ as above, where
$l_{n+1}$ determines the special section of the core bundle. We can consider $l_{n+1}$ as a function on the
total space $\ctgT E$ thanks to the canonical projection $\ctgT E\to \vectdual{C}$. We define the {\it
$(n+1)$-affine dual bundle} $\dblB A$ as an $(n+1)$-affine subbundle of $\ctgT E$ by
$$
\dblB A  = \{\zw\in \ctgT E: l_i(\zw)=1, \text{for}\, i=1, \ldots, n+1\}.
$$
Let us denote $A_i = \{x\in E_i: l_j(x)=1 \text{ for } j\neq i, 1\leq j\leq n+1\}$, $i=1, \ldots, n$, the
bases of the side bundles of $A$. The affine bundle $A\to A_i$ is a special corank one subbundle of $E\to
E_i$. The special section is induced from $l_{n+1}$ and the mentioned canonical action of the core bundle
$C\to M$ on $A$. Hence $\dblaffdual{A}{A_i}\to A_i$ is an affine corank $1$ subbundle of $\dblvecdual{E}{E_i}
\to E_i$, which we know is one of the side bundles of $\ctgT E$. Moreover, the total space
$\dblaffdual{A}{A_i}$ has also another $(n-1)$-affine bundle structures, which are denoted by $\aff_j^*$,
$j\neq i$, $1\leq j\leq n$, and are induced from the vector bundle structure of $\ctgT E\to
\dblvecdual{E}{E_j}$, whose Euler vector field is the phase lift of the Euler vector field for $E\to E_j$
(\cite{GR}).
\begin{theo}
The bases of the side bundles of the $(n+1)$-affine bundle $\dblB A$ are $A$ and its duals
$\dblaffdual{A}{A_1}, \ldots, \dblaffdual{A}{A_n}$. Moreover, $(\dblaffdual{A}{A_i}, \aff_j^*)$ is a special
affine bundle which is dual to the adjoint of  $(\dblaffdual{A}{A_j}, \aff_i^*)$, $i\neq j$.
\end{theo}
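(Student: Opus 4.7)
The plan is to leverage the $(n+1)$-vector bundle structure of $\ctgT E$ developed in \cite{GR} and descend the constraints $l_1, \ldots, l_{n+1}$ defining $\dblB A$ to each side bundle, imitating on every axis the procedure used in Theorem \ref{thm:dual2} and Proposition \ref{prop:affspduality} in the case $n=2$.

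First I would identify the sides. The $(n+1)$-vector bundle $\ctgT E$ has side bundles $E$ (the base of the cotangent projection) and $\dblvecdual{E}{E_i}$ for $i = 1, \ldots, n$ (the bases of the generalized Legendre projections). Under the cotangent projection $\ctgT E \to E$, the function $l_{n+1}$ becomes trivial because it depends only on the momentum coordinates conjugate to the core directions, while $l_1, \ldots, l_n$ restrict to the constraints defining $A \subset E$; hence the image of $\dblB A$ is exactly $A$. For $i \leq n$, the side projection $\ctgT E \to \dblvecdual{E}{E_i}$ kills the $E$-coordinates of degree $\ze_i$ and so trivializes the constraint $l_i = 1$; the remaining conditions $l_k = 1$ with $k \neq i$, $k \leq n$, together with $l_{n+1} = 1$ read through the identification of the core of $\dblvecdual{E}{E_i}$ with $\vectdual{E_i}$, are precisely the linear constraints which, by iterated application of Theorem \ref{thm:dual2}, cut out $\dblaffdual{A}{A_i}$ inside $\dblvecdual{E}{E_i}$.

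Next I would address the duality. Fix distinct $i, j \in \{1, \ldots, n\}$. The affine structure $\aff_j^*$ on $\dblaffdual{A}{A_i}$ is the restriction of the $j$-th vector bundle structure on $\ctgT E$; its core is inherited from the $j$-th core of $\ctgT E$, and $l_j$, which survives the $i$-th projection as a linear function, furnishes the required nowhere vanishing section, so that $(\dblaffdual{A}{A_i}, \aff_j^*)$ is indeed a special affine bundle. For the pairing, I would restrict the canonical bi-affine pairing of the double vector duals $\dblvecdual{E}{E_i}$ and $\dblvecdual{E}{E_j}$ over their common lower sides, as in the proof of Proposition \ref{prop:affspduality}, to the affine subbundles $\dblaffdual{A}{A_i}$ and $\dblaffdual{A}{A_j}$. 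After the shift by the sections induced from $l_i$ and $l_j$, this pairing becomes a non-degenerate special affine pairing; the sign that converts it into a duality with the \emph{adjoint} of $(\dblaffdual{A}{A_j}, \aff_i^*)$, rather than with $(\dblaffdual{A}{A_j}, \aff_i^*)$ itself, comes from the same asymmetry---minus the identity on the core---that produced the isomorphism $\Abf^{HVH} \simeq$ adjoint of $\Abf^f$ in Proposition \ref{prop:affspduality}.

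The hard part is expected to be combinatorial rather than conceptual: every ingredient is already present in the double-bundle case, but tracking at each iterated side which of $l_1, \ldots, l_{n+1}$ plays the role of defining constraint and which plays the role of core section, and keeping the $\Zet^{n+1}$-degree bookkeeping consistent across all pairs $(i,j)$, is delicate. A systematic use of the $\Zet^{n+1}$-grading on $\ctgT E$ from \cite{GR}, together with Proposition \ref{prop: isohulldvb} to pass freely between affine constraints and their vector hulls, should keep the argument under control.
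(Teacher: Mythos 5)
Your proposal is correct and follows essentially the same route as the paper: the identification of the side-bundle bases is obtained by descending the constraints $l_1,\ldots,l_{n+1}$ along the side projections of $\ctgT E$ (which is exactly the discussion the paper gives just before the theorem), and the duality with the adjoint is reduced to the pairing computation of Proposition \ref{prop:affspduality}, which the paper invokes by restricting to the special double affine bundle $(A,\aff_i,\aff_j)$ with the core section induced from $l_{n+1}$. The only caveat is a minor imprecision in your description of where $l_{n+1}$ lives after the $i$-th projection (it descends to the lowest-level base $\vectdual{C}$ of $\dblvecdual{E}{E_i}$ dual to the ultracore of $E$, not to the core of $\dblvecdual{E}{E_i}$), which does not affect the argument.
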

\begin{proof} The first statement follows from the above discussion.
The second one is an immediate consequence of Proposition \ref{prop:affspduality}, since we can restrict our
consideration to the special double affine bundle $(A, \aff_i, \aff_j)$ with the induced from $l_{n+1}$
section of the core bundle.
\end{proof}

Reassuming, a special $n$-affine bundle $A$ gives rise to an $(n+1)$-affine bundle $\dblB A$. The duals of $A$
can be recognized as the bases of the side bundles of $\dblB A$. The associated pairings are derived from the
canonical symplectic structure on $\ctgT E$, the $(n+1)$-tuple hull of $\dblB A$. We postpone to a separate
paper the problem, how the contact structure on an abstract $n$-affine bundle $A$ determines the pairings
between some $(n-1)$-affine bundles related to the side bundles of $A$ (compare with Theorem 6.1 in
\cite{GR}).

\bigskip
\noindent Janusz Grabowski\\ Institute of Mathematics, Polish Academy of Sciences\\\'Sniadeckich 8, P.O. Box
21, 00-956 Warszawa,
Poland\\{\tt jagrab@impan.pl}\\\\
\noindent Miko\l aj Rotkiewicz\\
Institute of Mathematics,
                University of Warsaw \\
                Banacha 2, 02-097 Warszawa, Poland \\
                 {\tt mrotkiew@mimuw.edu.pl}
\\\\
\noindent Pawe\l\  Urba\'nski\\
Division of Mathematical Methods in Physics \\
                University of Warsaw \\
                Ho\.za 69, 00-681 Warszawa, Poland \\
                 {\tt urba\'nski@fuw.edu.pl}
\end{document}